\theoremstyle{plain}
\numberwithin{equation}{section}
\theoremstyle{plain}
\newtheorem{proposition}[equation]{Proposition}
\newtheorem{corollary}[equation]{Corollary}
\newtheorem{theorem}[equation]{Theorem}
\newtheorem{lemma}[equation]{Lemma}
\theoremstyle{definition}
\newtheorem{example}[equation]{Example}
\newtheorem{remark}[equation]{Remark}
\let\stdthebibliography\thebibliography
\let\stdendthebibliography\endthebibliography
\newcommand{\nc}{\newcommand}
\newcommand{\N}{{\mathbb N}}
\newcommand{\D}{{\mathbb D}}
\newcommand{\T}{{\mathbb T}} %\mathbb D}}
\newcommand{\Ker}{\operatorname{Ker}}
\newcommand{\Id}{\operatorname{Id}}
\newcommand{\Rng}{\operatorname{Rng}}
\newcommand{\lra}{\longrightarrow}
\newcommand{\lmto}{\longmapsto}
\newcommand{\eps}{\varepsilon}
\newcommand{\vp}{\varphi}
\nc{\bea}{\begin{eqnarray}}
\nc{\eea}{\end{eqnarray}}
\nc{\beqa}{\begin{eqnarray*}}
\nc{\eeqa}{\end{eqnarray*}}
\nc{\Hi}{H^{\infty}}
\nc{\loi}{\ell^{\infty}}
\nc{\NL}{N^+\vert \Lambda}
\nc{\hf}{{\mathcal H}_{\phi}}
\nc{\liL}{\lambda\in\Lambda}
\nc{\nn}{\nonumber}
\nc{\dst}{\displaystyle}
\newenvironment{proof*}{\vskip 2mm\noindent {}}{$\blacksquare$ \vskip 2mm}
\numberwithin{equation}{section}
\title%[Continuation in range spaces of co-analytic Toeplitz operators]
{Boundary values in range spaces of co-analytic truncated Toeplitz operators}
\author{Andreas Hartmann \& William T.\ Ross}
\address{Institut de Math\'ematiques de Bordeaux,
Universit\'e Bordeaux I, 351 cours de la Lib\'eration,
33405 Talence, France}
\address{Department of Mathematics and Computer Science, University of Richmond, VA 23173, USA}
\thanks{This work has been done while the first named author was staying at
University of Richmond as the Gaines chair in mathematics. He would like to thank that institution for the hospitality
and support during his stay. This
author is also partially supported by ANR FRAB}
\email{hartmann@math.u-bordeaux.fr, wross@richmond.edu}
\date{\today}
\keywords{Continuation, Model spaces, Toeplitz operators, Truncated Toeplitz operators}
\subjclass[2010]{30B30,30C40,30E25,30H10,30J10,47B32}
\begin{document}

\bibliographystyle{amsalpha}
\begin{abstract} 
Functions in backward shift invariant subspaces have nice analytic continuation
properties outside the spectrum of the inner function defining the space. Inside the spectrum
of the inner function,
Ahern and Clark showed that under some distribution condition on the zeros and the
singular measure of the inner function, it is possible to obtain non-tangential boundary values
of every function in the backward shift invariant subspace as well as for their
derivatives up to a certain order. Here we will investigate, at least when the inner function is a Blaschke product, the non-tangential boundary values of 
the functions of the backward shift invariant subspace after having applied a co-analytic
(truncated) Toeplitz operator. There appears to be a smoothing effect. 
\end{abstract}

\maketitle

%\tableofcontents

\section{Introduction}

Let $H^2$ denote the Hardy space of the open unit disk $\mathbb{D} = \{|z| < 1\}$ and $L^2 = L^2(d\theta/2 \pi)$ denote the classical Lebesgue space of the unit circle $\T= \{|z| = 1\}$ with norm $\|\cdot\|$. $H^2$ is regarded as a closed subspace of $L^2$ in the usual way via non-tangential boundary values. For an inner function $I$, we let 
$K_{I} = H^2 \ominus I H^2$ be the well-known model space \cite{Niktr}. 

The boundary behavior of functions in $K_I$ have been well studied. For example, every function in $K_{I}$ has a meromorphic pseudo-continuation to the extended exterior disk \cite{CR, DSS, SR02}: For every $f \in K_I$, there is a meromorphic function $F$ on the extended exterior disk  whose non-tangential boundary values match those of $f$ almost everywhere. 
As another example \cite{Mo}, every $f \in K_{I}$ has an analytic continuation 
across $\T \setminus \sigma(I)$, where 
$$\sigma(I) = \left\{|z| \leq 1: \varliminf_{\lambda \to z} |I(\lambda)| = 0\right\}$$ 
is the spectrum of $I$. 
If $I = B_{\Lambda} s_{\mu}$, where $B_{\Lambda}$ is the Blaschke factor with zeros $\Lambda = \{\lambda_n\}_{n \geq 1} \subset \D$ (repeated according to multiplicity) and $s_{\mu}$ is the singular inner factor with associated singular measure $\mu$ on $\T$, then 
$$\sigma(I) = \Lambda^{-} \cup \mbox{suppt}(\mu).$$
Note that every function in $K_{I}$ has a pseudo-continuation across $\T$ although, if the Blaschke product has zeros which accumulate everywhere on $\T$ or if the support of $\mu$ is all of $\T$, for example, functions in $K_I$ might not have an analytic continuation across any subarc of $\T$. 

Our starting point for this paper is a result of Ahern and Clark \cite{AC70} which examines the non-tangential boundary behavior of functions in $K_I$ even closer by considering what happens near $\sigma(I)$ where analytic continuation is not guaranteed. To state their result, we set a bit of notation: Let $P_I$ be the orthogonal projection of $L^2$ onto $K_I$ and $A_z: K_{I} \to K_{I},  A_z f = P_{I}(z f)$ be the compression of the shift (`multiplication by $z$' on $H^2$) to $K_I$. 

\begin{theorem}[\cite{AC70}]  \label{AC-paper}
For an inner function $I = B_{\Lambda} s_{\mu}$ and $\zeta \in \T$, the following are equivalent: 
\begin{enumerate}
\item Every $f \in K_{I}$ has a non-tangential limit at $\zeta$, i.e., 
$$f(\zeta) := \angle \lim_{\lambda \to \zeta} f(\lambda)$$ exists. 
%\marginpar{{\tiny I changed the symbol for NT limit. It is a bit cleaner.}}

\item For every $f \in K_I$, $f(\lambda)$ is bounded as $\lambda \to \zeta$ non-tangentially. 
\item $P_{I} 1 \in \Rng(\Id - \overline{\zeta} A_z)$.
\item $(\Id - \overline{\lambda} A_z)^{-1} P_{I} 1$ is norm bounded as $\lambda \to \zeta$ non-tangentially. 
\item $I$ has an angular derivative in the sense of Caratheodory at $\zeta$, i.e., 
$$\angle \lim_{\lambda \to \zeta} I(\lambda)  = \eta \in \T$$ and 
$$\angle \lim_{\lambda \to \zeta} I'(\lambda) \mbox{\; \; exists}.$$
\item The following two conditions hold:
\begin{equation} \label{AC1}
\sum_{n \geq 1} \frac{1 - |\lambda_{n}|^2}{|\zeta - \lambda_{n}|^2} < \infty
\end{equation}
\begin{equation} \label{AC2}
\int_{\T} \frac{d \mu(\xi)}{|\xi - \zeta|^2} < \infty.
\end{equation}
\end{enumerate}
\end{theorem}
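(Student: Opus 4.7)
The plan is to organise the six conditions around a single analytic quantity, the reproducing kernel
$$k_\lambda^I(z) = \frac{1 - \overline{I(\lambda)} I(z)}{1 - \bar\lambda z}$$
of $K_I$, which satisfies $f(\lambda) = \langle f, k_\lambda^I \rangle$ and has squared norm $\|k_\lambda^I\|^2 = (1 - |I(\lambda)|^2)/(1 - |\lambda|^2)$. The first step is to record the Neumann-series identity
$$(\Id - \bar\lambda A_z)^{-1} P_I 1 = P_I \sum_{n \geq 0} \bar\lambda^n z^n = P_I \frac{1}{1 - \bar\lambda z} = k_\lambda^I,$$
obtained from $A_z^n P_I 1 = P_I z^n$. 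This identifies condition (4) with non-tangential boundedness of $\|k_\lambda^I\|$ and makes that boundedness the target of the whole argument.

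I would then run the chain (1)$\Rightarrow$(2)$\Rightarrow$[$\|k_\lambda^I\|$ bounded]$\Leftrightarrow$(4): the first step is trivial; the second is the Banach--Steinhaus theorem applied to the evaluation functionals $f \mapsto \langle f, k_\lambda^I \rangle$; the reverse direction [$\|k_\lambda^I\|$ bounded]$\Rightarrow$(2) is Cauchy--Schwarz. Next, the Julia--Carath\'eodory theorem bridges $\|k_\lambda^I\|$ and condition (5): $I$ has an angular derivative at $\zeta$ precisely when $(1-|I(\lambda)|)/(1-|\lambda|)$ is non-tangentially bounded, which via the norm formula is exactly boundedness of $\|k_\lambda^I\|$. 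Condition (5)$\Leftrightarrow$(6) is the analytic core: factoring $I = B_\Lambda s_\mu$ and carefully expanding $(1-|I(\lambda)|^2)/(1-|\lambda|^2)$ in a Stolz region, one sees that its non-tangential limit equals $\sum_n (1-|\lambda_n|^2)/|\zeta - \lambda_n|^2 + 2\int d\mu(\xi)/|\xi - \zeta|^2$, so finiteness of both pieces in (6) matches finiteness of this limit. I expect this to be the main obstacle, since it requires uniform control both of the Blaschke partial products and of the Poisson-type integral of $d\mu$; the rest of the theorem will be organisational once the kernel identity is in place.

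To close the loop I would prove (5)$\Rightarrow$(1) as follows. Let $\eta \in \T$ be the non-tangential limit of $I$ at $\zeta$ and set $k_\zeta^I(z) := (1 - \bar\eta I(z))/(1-\bar\zeta z)$. Pointwise convergence $k_\lambda^I(z) \to k_\zeta^I(z)$ for each fixed $z \in \D$ is immediate from $I(\lambda) \to \eta$ non-tangentially; combined with the uniform norm bound from (5) and the totality of $\{k_z^I : z \in \D\}$ in $K_I$, this upgrades to weak convergence $k_\lambda^I \rightharpoonup k_\zeta^I$. Hence $f(\lambda) = \langle f, k_\lambda^I \rangle \to \langle f, k_\zeta^I \rangle$ non-tangentially for every $f \in K_I$, giving (1). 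Condition (3) fits in via the same weak limit: passing to the weak limit in the identity $(\Id - \bar\lambda A_z) k_\lambda^I = P_I 1$ yields $(\Id - \bar\zeta A_z) k_\zeta^I = P_I 1$, so $P_I 1 \in \Rng(\Id - \bar\zeta A_z)$. Conversely, if $(\Id - \bar\zeta A_z) g = P_I 1$ then a one-line resolvent manipulation gives
$$k_\lambda^I - g = (\bar\zeta - \bar\lambda)(\Id - \bar\lambda A_z)^{-1} A_z g,$$
whose norm is dominated by $(|\zeta-\lambda|/(1-|\lambda|))\,\|g\|$, which stays bounded in any Stolz region at $\zeta$, yielding (4) and completing the cycle.
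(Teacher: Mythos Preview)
The paper does not give its own proof of this theorem; it is quoted from Ahern--Clark \cite{AC70} and used as background. So there is no in-paper proof to compare against directly. That said, the paper does record two of the key ingredients your outline relies on: the identity $(\Id-\bar\lambda A_z)^{-1}P_I1=k_\lambda^I$ (derived in the introduction) and the abstract resolvent lemma (Lemma~\ref{AClemma}) asserting that for a contraction $L$ with $\Id-\xi L$ injective, non-tangential norm boundedness of $(\Id-\lambda L)^{-1}y$ is equivalent to $y\in\Rng(\Id-\xi L)$, with weak convergence to $(\Id-\xi L)^{-1}y$. Your one-line resolvent manipulation for (3)$\Rightarrow$(4) is precisely one half of the proof of that lemma, and your weak-limit argument for (4)$\Rightarrow$(3) and (5)$\Rightarrow$(1) is the other half.

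Your outline is correct and is essentially the standard route to the Ahern--Clark theorem: reduce everything to non-tangential boundedness of $\|k_\lambda^I\|^2=(1-|I(\lambda)|^2)/(1-|\lambda|^2)$, invoke Julia--Carath\'eodory for the link with (5), and do the Frostman-type computation for (5)$\Leftrightarrow$(6). You are right to flag (5)$\Leftrightarrow$(6) as the only genuinely analytic step; the rest is soft. One small remark: for the converse (3)$\Rightarrow$(4) you should also note that $\Id-\bar\zeta A_z$ is injective (so that the preimage $g$ is unique and the weak limit is well defined); the paper supplies this in the Remark following Lemma~\ref{AClemma}.
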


This is only a partial statement of the Ahern-Clark result. They went on further to characterize the
existence of non-tangential boundary limits of the derivatives (up to a given order) of functions in $K_I$.

Note that simple examples show that one can have an inner function $I$ and a $\zeta \in \T$ such that every function in $K_I$ has a non-tangential limit at $\zeta$ without necessarily having an analytic continuation to a neighborhood of $\zeta$. 

If one (and hence all) of the equivalent conditions of the Ahern-Clark theorem is 
satisfied, then it makes sense to evaluate functions $f\in K_I$ at $\zeta$, and
the corresponding point evaluation functional can be represented by $k_{\zeta}^I$. That is to say that
$$f(\zeta)  = \langle f, k_{\zeta}^{I} \rangle \quad \forall f \in K_{I}.$$

In this paper, we study the boundary values of functions in $K_I$ even further - beyond pseudo-continuation, analytic continuation, or the above Ahern-Clark result - by replacing the function $P_I 1$ in conditions (3) and (4) in the Ahern-Clark theorem with $P_I h$ where $h \in H^2$.

Let us take a closer look at $(\Id - \overline{\lambda} A_z)^{-1} P_I 1$ from condition (4). Since $(A_z)^{n} g = P_I z^n g$ for any $g \in K_I$, we get, for every $f\in K_I$ and $\lambda\in \D$,
%\begin{align*}
\bea\label{formally0}
 \langle f, (\Id - \overline{\lambda} A_z)^{-1} P_I 1 \rangle & = &\left\langle f, \sum_{n = 0}^{\infty} \overline{\lambda}^n (A_z)^n P_I 1\right\rangle
  = \left\langle f, \sum_{n = 0}^{\infty} \overline{\lambda}^n z^n P_I 1 \right\rangle
 = \left\langle f, \frac{1}{1 - \overline{\lambda} z} P_I 1\right\rangle\nn\\
 &=& \left\langle f, \frac{1}{1 - \overline{\lambda} z} (P_I 1 - 1) + 
    \frac{1}{1 - \overline{\lambda} z}1 \right\rangle
 = \left\langle f, \frac{1}{1 - \overline{\lambda} z} 1 \right\rangle\nn\\
 &=& f(\lambda).
\eea
%\end{align*}
Thus $(\Id - \overline{\lambda} A_z)^{-1} P_I 1$ is the reproducing kernel $k_{\lambda}^{I}$ for the model space $K_I$ and the Ahern-Clark theorem gives a condition as to when $k_{\lambda}^{I}$ converges weakly to the boundary reproducing kernel function $k_{\zeta}^I$ as $\lambda \to \zeta$ non-tangentially. 

When $P_I 1$ is replaced by $P_I h$, where $h \in H^2$, an analogous calculation to the one in \eqref{formally0} 
%calculation 
gives us, at least formally, 
\begin{equation} \label{formally}
\langle f, (\Id - \overline{\lambda} A_z)^{-1} P_I h \rangle = (A_{\overline{h}} f)(\lambda),
\end{equation}
where $ A_{\overline{h}} f = P_{I}(\overline{h} f)$ is the truncated Toeplitz operator on $K_I$ - which we assume to be bounded. Note that $A_{\overline{h}}$ is initially densely defined on bounded functions on $K_I$ and, for certain $h$, can be extended to be bounded on $K_I$. Certainly if $h \in H^{\infty}$, the bounded analytic functions on $\D$, then $A_{\overline{h}}$ is bounded on $K_I$. However, there are unbounded $h \in H^2$ which yield bounded $A_{\overline{h}}$. We will discuss these details further in the next section.
Truncated Toeplitz operators have been studied quite a lot recently and we refer the reader to the seminal paper by Sarason which started it all \cite{Sa07}.

By examining the weak convergence of the kernel functions 
\begin{equation} \label{kfAh}
k_{\lambda}^{h} := (\Id - \overline{\lambda} A_z)^{-1} P_I h
\end{equation}
 as $\lambda \to \zeta$ (non-tangentially), we will determine the boundary behavior of functions in $\Rng A_{\overline{h}}$, the range of the truncated Toeplitz operator $A_{\overline{h}}$. 
Since $\Rng A_{\overline{h}} \subset K_I$, functions in this range will have finite non-tangential limits at at all points $\zeta \in \T$ where conditions \eqref{AC1} and \eqref{AC2} are satisfied. Certain choices of $h$  can force other points $\zeta \in \mathbb{T}$ to be points of finite non-tangential limits. In the Section \ref{S5} of this paper, we will make a few remarks about the boundary behavior of the functions 
$$f_{h}(\lambda) := \langle f, (\Id - \overline{\lambda} A_z)^{-1} P_I h \rangle$$
(which is the left-hand side of \eqref{formally}), where the truncated Toeplitz operator $A_{\overline{h}}$ is not necessarily bounded and $f \in K_I$ is not necessarily in the domain of $A_{\overline{h}}$.

To state our main theorem, we introduce 
some notation. For $\lambda \in \mathbb{D}$, let 
$$b_{\lambda}(z) = \frac{z - \lambda}{1 - \overline{\lambda} z}$$ be the single Blaschke factor 
with zero at $\lambda$. For a Blaschke product $B_{\Lambda}=\prod_{\lambda\in\Lambda}
(|\lambda|/\lambda)b_{\lambda}$ with zeros $\Lambda = \{\lambda_{n}\}_{n \geq 1}$, repeated
 accordingly to multiplicity, let the Takenaka-Malquist-Walsh functions be defined by
$$\gamma_{n}(z) = \frac{\sqrt{1 - |\lambda_n|^2}}{1 - \overline{\lambda_{n}} z} \prod_{k = 1}^{n - 1} b_{\lambda_{k}}(z).$$
It is well known \cite[p.~117]{Niktr} \cite{Tak} that $\{\gamma_{n}: n \in \mathbb{N}\}$ is an orthonormal basis for $K_{B_{\Lambda}}$. In fact, this basis was used in the proof of the Ahern-Clark theorem mentioned earlier. 
With our notation set, our main result reads as follows.

\begin{theorem} \label{MT-one}
When $I$ is a Blaschke product
with zeros $\Lambda = \{\lambda_{n}\}_{n \geq 1}$ and $h \in H^2$ so that $A_{\overline{h}}$ is bounded on $K_{I}$, every function in $\Rng A_{\overline{h}}$ has a finite non-tangential limit at $\zeta \in \T$ if and only if 
\begin{equation} \label{cond-xx}
\sum_{n \geq 1} |(A_{\overline{h}} \gamma_{n})(\zeta)|^2 < \infty.
\end{equation}
\end{theorem}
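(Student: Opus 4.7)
The plan rests on the kernel identity $(A_{\overline h}f)(\lambda)=\langle f,k_\lambda^h\rangle$ from \eqref{formally} together with the Parseval expansion in the Takenaka-Malquist-Walsh basis,
\[
\|k_\lambda^h\|^2=\sum_{n\geq 1}|\langle\gamma_n,k_\lambda^h\rangle|^2=\sum_{n\geq 1}|(A_{\overline h}\gamma_n)(\lambda)|^2,
\]
obtained by taking $f=\gamma_n$ in the identity. The strategy is to reformulate the condition ``every $g\in\Rng A_{\overline h}$ has a finite non-tangential limit at $\zeta$'' as weak convergence in $K_I$ of the curve $\lambda\mapsto k_\lambda^h$ as $\lambda\to\zeta$ non-tangentially, and to recognise \eqref{cond-xx} as precisely the statement that the prospective weak limit is a genuine element of $K_I$.

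For the forward direction, assume every $g\in\Rng A_{\overline h}$ has a finite non-tangential limit at $\zeta$. Then the linear functionals $f\mapsto(A_{\overline h}f)(\lambda)=\langle f,k_\lambda^h\rangle$ converge pointwise on $K_I$ as $\lambda\to\zeta$ non-tangentially, so Banach--Steinhaus forces $\sup\|k_\lambda^h\|<\infty$ inside some cone $\Gamma_\alpha(\zeta)$. Each $A_{\overline h}\gamma_n$ belongs to $\Rng A_{\overline h}$, hence possesses a finite non-tangential limit $(A_{\overline h}\gamma_n)(\zeta)$, and Fatou's lemma applied to the Parseval expansion above yields
\[
\sum_{n\geq 1}|(A_{\overline h}\gamma_n)(\zeta)|^2\leq\liminf_{\lambda\to\zeta}\|k_\lambda^h\|^2<\infty.
\]

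For the converse, condition \eqref{cond-xx} implicitly presupposes that each $A_{\overline h}\gamma_n\in K_I$ already has a non-tangential limit at $\zeta$, and so lets us define the candidate limit $k_\zeta^h:=\sum_{n\geq 1}\overline{(A_{\overline h}\gamma_n)(\zeta)}\gamma_n\in K_I$. It then suffices to prove $k_\lambda^h\to k_\zeta^h$ weakly in $K_I$ as $\lambda\to\zeta$ non-tangentially, since this immediately delivers $(A_{\overline h}f)(\lambda)=\langle f,k_\lambda^h\rangle\to\langle f,k_\zeta^h\rangle$ for every $f\in K_I$. Weak convergence splits into (i) the pointwise limits $\langle\gamma_n,k_\lambda^h\rangle=(A_{\overline h}\gamma_n)(\lambda)\to(A_{\overline h}\gamma_n)(\zeta)$ for every $n$, and (ii) a uniform bound $\sup_{\lambda\in\Gamma_\alpha(\zeta)}\|k_\lambda^h\|<\infty$; since the $\gamma_n$ have dense linear span in $K_I$, (i) and (ii) together produce the weak limit by a standard density argument.

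The main obstacle is (ii): upgrading the square-summability \eqref{cond-xx} at the single boundary point $\zeta$ to a uniform bound on an entire non-tangential cone. When the Ahern-Clark condition holds at $\zeta$ this is automatic from the factorisation $k_\lambda^h=A_h k_\lambda^I$ and the boundedness of $\|k_\lambda^I\|$ furnished by Theorem \ref{AC-paper}(4); the interesting situation is the one in which $\|k_\lambda^I\|$ blows up non-tangentially at $\zeta$ yet the smoothing action of $A_{\overline h}$ keeps $\|k_\lambda^h\|$ bounded. I would attack this case using the explicit representation $k_\lambda^h=P_I\bigl(h/(1-\overline\lambda z)\bigr)$, the subharmonicity of $\lambda\mapsto\|k_\lambda^h\|^2$, and an $\eps$-splitting argument that exploits the pointwise convergence $(A_{\overline h}\gamma_n)(\lambda)\to(A_{\overline h}\gamma_n)(\zeta)$ for each $n$ to control the tail $\sum_{n>N}|(A_{\overline h}\gamma_n)(\lambda)|^2$ uniformly in $\lambda\in\Gamma_\alpha(\zeta)$.
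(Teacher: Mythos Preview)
Your necessity argument is correct and in fact slightly crisper than the paper's: Banach--Steinhaus plus Fatou is a clean way to extract \eqref{cond-xx} from the existence of boundary limits.

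The gap is in the sufficiency direction, precisely at your step (ii). You correctly identify the uniform cone bound $\sup_{\lambda\in\Gamma_\alpha(\zeta)}\|k_\lambda^h\|<\infty$ as the crux, but the proposed attack does not close it. Subharmonicity of $\lambda\mapsto\|k_\lambda^h\|^2$ yields nothing without boundary control on a full arc, which you do not have. The $\eps$-splitting idea needs uniform smallness of the tail $\sum_{n>N}|(A_{\overline h}\gamma_n)(\lambda)|^2$ over the cone, but the poles $1/\overline{\lambda_j}$ of the rational functions $A_{\overline h}\gamma_n$ may accumulate at $\zeta$, so the rate of the pointwise convergence $(A_{\overline h}\gamma_n)(\lambda)\to(A_{\overline h}\gamma_n)(\zeta)$ degrades with $n$ in an uncontrolled way. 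Pointwise convergence of terms plus summability of the limits simply does not force uniform summability of the approximants; this is the standard obstruction to upgrading Fatou to dominated convergence.

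The paper sidesteps this entirely. Rather than estimate $\|k_\lambda^h\|$, it invokes Lemma~\ref{AClemma}: norm boundedness of $k_\lambda^h=(\Id-\overline\lambda A_z)^{-1}P_I h$ along non-tangential approach is \emph{equivalent} to $P_I h\in\Rng(\Id-\overline\zeta A_z)$, i.e.\ to the interpolation problem of Corollary~\ref{C-int-prob}. It then takes your candidate $u=\sum_n\overline{(A_{\overline h}\gamma_n)(\zeta)}\,\gamma_n$ and, using the explicit residue formula of Proposition~\ref{prop2.2}, verifies by direct computation that $(1-\overline\zeta z)u-h$ vanishes to the required multiplicity at every zero of $I$. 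That interpolation check (done first for simple zeros via a linear-independence-of-kernels trick, then in general) is what actually manufactures the uniform bound; Lemma~\ref{AClemma} then converts it back into norm boundedness and weak convergence of $k_\lambda^h$. Your outline is missing this interpolation verification, and without it step (ii) remains open.
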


The alert reader might question whether or not $(A_{\overline{h}} \gamma_n)(\zeta)$ in \eqref{cond-xx} actually exists. It is after all the non-tangential boundary value of a function from $K_I$. However, as we will see in the proof of this theorem, $A_{\overline{h}} \gamma_n$ will turn out to be a rational function whose poles lie outside of $\D^{-}$ and so $A_{\overline{h}} \gamma_n$ can be evaluated at $\zeta$ without any difficulty. 
Also observe that when $h = 1$, 
$$\sum_{n \geq 1} |(A_{\overline{h}} \gamma_{n})(\zeta)|^2 = \sum_{n \geq 1} |\gamma_n(\zeta)|^2 = \sum_{n \geq 1} \frac{1 - |\lambda_{n}|^2}{|\zeta - \lambda_{n}|^2},$$ giving us condition \eqref{AC1} in the Ahern-Clark theorem. 

The proof of Theorem \ref{MT-one} will show that when condition \eqref{cond-xx} is satisfied then, as $\lambda \to \zeta$ non-tangentially,  the kernel functions $k^{h}_{\lambda}$ from \eqref{kfAh} converge weakly to some function $k^{h}_{\zeta} \in K_{I}$. This function turns out to be sort of a reproducing kernel for $\Rng A_{\overline{h}}$ at $\zeta$ in that 
$$(A_{\overline{h}} f)(\zeta) = \langle f, k^{h}_{\zeta} \rangle \quad \forall f \in K_{I}.$$ We will see from the proof of Theorem \ref{MT-one} that 
$$\|k^{h}_{\zeta}\|^2 = \sum_{n \geq 1} |(A_{\overline{h}} \gamma_{n})(\zeta)|^2.$$

In Section \ref{S3} we will compute an explicit formula for $A_{\overline{h}}\gamma_n(\zeta)$
which turns out to be quite cumbersome in the general case.
Still, we are able to give some examples in Section \ref{examples} of when the condition in \eqref{cond-xx} holds. We mention that when $I$ is an interpolating Blaschke product \cite[Ch.~VII]{Garnett}, the condition in \eqref{cond-xx} 
becomes much simpler. 

\begin{theorem} \label{MT-two}
If $I$ is an interpolating Blaschke product with zeros $\Lambda = \{\lambda_{n}\}_{n \geq 1}$ and $h \in H^2$ such that $A_{\overline{h}}$ is bounded on $K_I$, then every function in $\Rng A_{\overline{h}}$ has a finite non-tangential limit at $\zeta \in \T$ if and only if 
\begin{equation}  \label{eq1p29}
\sum_{n \geq 1} (1 - |\lambda_{n}|^2) \left|\frac{h(\lambda_n)}{\zeta - \lambda_n}\right|^2 < \infty.
\end{equation}
\end{theorem}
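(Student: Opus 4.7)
The plan is to deduce Theorem \ref{MT-two} from Theorem \ref{MT-one} by proving that, for an interpolating Blaschke product, condition \eqref{cond-xx} is equivalent to condition \eqref{eq1p29}. The structural input I will use is the classical Shapiro--Shields fact that when $I$ is an interpolating Blaschke product with zeros $\Lambda$, the normalized reproducing kernels $\tilde k_{\lambda_n}^{I}(z) := \sqrt{1-|\lambda_n|^2}\,k_{\lambda_n}^{I}(z)$ form a Riesz basis of $K_{I}$.

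The first preparatory step is the identity $A_{\overline{h}} k_{\lambda_n}^{I} = \overline{h(\lambda_n)}\,k_{\lambda_n}^{I}$ at each zero $\lambda_n$ of $I$. Since $\lambda_n$ is a zero of $I$, the model-space kernel reduces to the Szeg\H{o} kernel $k_{\lambda_n}^{I}(z) = 1/(1-\overline{\lambda_n}z)$, which lies in $K_{I}$. The standard $H^2$-identity $T_{\overline{h}}k_{\lambda_n} = \overline{h(\lambda_n)}k_{\lambda_n}$ then gives $A_{\overline{h}}k_{\lambda_n}^{I} = P_{I}(\overline{h}\,k_{\lambda_n}^{I}) = P_{I}(\overline{h(\lambda_n)}k_{\lambda_n}^{I}) = \overline{h(\lambda_n)}k_{\lambda_n}^{I}$. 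Evaluating at $\zeta \in \T$ and using $|1-\overline{\lambda_n}\zeta| = |\zeta-\lambda_n|$, one obtains
$$|(A_{\overline{h}}\tilde k_{\lambda_n}^{I})(\zeta)|^{2} = \frac{(1-|\lambda_n|^2)|h(\lambda_n)|^2}{|\zeta-\lambda_n|^2},$$
so the sum in \eqref{eq1p29} is exactly $\sum_n |(A_{\overline{h}}\tilde k_{\lambda_n}^{I})(\zeta)|^{2}$.

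For the forward direction, if \eqref{cond-xx} holds then Theorem \ref{MT-one} produces the element $k_{\zeta}^{h} \in K_{I}$ with $(A_{\overline{h}}f)(\zeta) = \langle f, k_{\zeta}^{h}\rangle$ for every $f \in K_{I}$. Because $\{\tilde k_{\lambda_n}^{I}\}$ is a Riesz basis, the norm $\|k_{\zeta}^{h}\|^{2}$ is equivalent to $\sum_n |\langle \tilde k_{\lambda_n}^{I}, k_{\zeta}^{h}\rangle|^{2} = \sum_n |(A_{\overline{h}}\tilde k_{\lambda_n}^{I})(\zeta)|^{2}$, which is \eqref{eq1p29}. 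For the reverse direction, assume \eqref{eq1p29} and expand any $f \in K_{I}$ in the Riesz basis as $f = \sum_n a_n \tilde k_{\lambda_n}^{I}$ with $\sum|a_n|^2 \asymp \|f\|^2$. Since $A_{\overline{h}}$ is bounded, $A_{\overline{h}}f = \sum_n a_n \overline{h(\lambda_n)}\,\tilde k_{\lambda_n}^{I}$ in $K_{I}$, so pointwise on $\D$ one has $(A_{\overline{h}}f)(\lambda) = \sum_n a_n \overline{h(\lambda_n)}\,\tilde k_{\lambda_n}^{I}(\lambda)$.

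The main technical step---and the expected obstacle---is justifying passage to the boundary in this series. The key estimate is that for $\lambda$ in the Stolz region $\Gamma_{\alpha}(\zeta)$ and any $\lambda_n \in \D$,
$$|1-\overline{\lambda_n}\zeta| \le |1-\overline{\lambda_n}\lambda| + |\zeta-\lambda| \le |1-\overline{\lambda_n}\lambda| + \alpha(1-|\lambda|) \le (1+\alpha)|1-\overline{\lambda_n}\lambda|,$$
where the last step uses $1-|\lambda| \le |1-\overline{\lambda_n}\lambda|$. This yields the uniform pointwise bound $|\tilde k_{\lambda_n}^{I}(\lambda)| \le (1+\alpha)|\tilde k_{\lambda_n}^{I}(\zeta)|$, and the series is then majorized by $(1+\alpha)\sum_n |a_n|\,|h(\lambda_n)|\,|\tilde k_{\lambda_n}^{I}(\zeta)|$, which is finite by Cauchy--Schwarz together with \eqref{eq1p29} and $\sum|a_n|^2 < \infty$. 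Dominated convergence then gives the finite non-tangential limit $(A_{\overline{h}}f)(\lambda) \to \sum_n a_n \overline{h(\lambda_n)}\,\tilde k_{\lambda_n}^{I}(\zeta)$ as $\lambda \to \zeta$, so every function in $\Rng A_{\overline{h}}$ has a non-tangential limit at $\zeta$, and Theorem \ref{MT-one} delivers \eqref{cond-xx}.
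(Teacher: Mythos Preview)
Your proof is correct but proceeds differently from the paper's. The paper does \emph{not} go through Theorem~\ref{MT-one}: by Proposition~\ref{AC-Th} and Corollary~\ref{C-int-prob}, the existence of non-tangential limits for all of $\Rng A_{\overline h}$ is equivalent to the existence of $k^h_\zeta\in K_I$ satisfying $(1-\overline{\zeta}z)k^h_\zeta-h\in IH^2$, i.e., $k^h_\zeta(\lambda_n)=h(\lambda_n)/(1-\overline{\zeta}\lambda_n)$ for all $n$; Shapiro--Shields then says this interpolation problem is solvable in $K_I$ exactly when \eqref{eq1p29} holds. Your argument uses the same Shapiro--Shields input in its Riesz-basis formulation: in the forward direction you read \eqref{eq1p29} off as the moment sequence $\sum_n|\langle \tilde k^I_{\lambda_n},k^h_\zeta\rangle|^2$ (which is legitimate, since a Riesz basis is a frame), and in the reverse direction you bypass the abstract interpolation step entirely with a direct dominated-convergence argument via the Stolz-region bound $|\tilde k^I_{\lambda_n}(\lambda)|\le(1+\alpha)|\tilde k^I_{\lambda_n}(\zeta)|$. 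The paper's route is shorter and independent of Theorem~\ref{MT-one}; yours is more explicit about the limiting mechanism, and your final appeal to Theorem~\ref{MT-one} to recover \eqref{cond-xx} is harmless but unnecessary for Theorem~\ref{MT-two} itself.
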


We will discuss an example in Section
\ref{examples} which will show that this condition does, in ge\-neral, not apply
to non-interpolating Blaschke products.
In fact, it
already fails when we take a Blaschke product associated with a non-separated 
union of two interpolating sequences. 
Although we do not develop this further here, the corresponding example
will show how one can obtain a condition for finite unions of interpolating Blaschke
products.\\

Non-tangential boundary values of functions in spaces related to backward shift invariant subspaces
have been studied recently. We would like to mention in particular the results
by Fricain and Mashreghi dealing with de Branges-Rovnyak spaces $\mathcal{H}(b)$
 \cite{Fricain-Mashreghi, Fricain-Mashreghi2} which are one way of generalizing the backward
shift invariant subspaces.
See Sarason's book \cite{Sa} for relations between the spaces
$\mathcal{M}(\overline{a}):=T_{\overline{a}}H^2$ and $\mathcal{H}(b)$ when $b$ is non extreme (this
guarantees that there is $a\in \operatorname{Ball}(\Hi)$ such that $|a|^2+|b|^2=1$). Our situation is somewhat different
since we consider Toeplitz operators not on the whole $H^2$ but only on the
model space $K_I$.

Finally, the first mentioned author has considered analytic
continuation questions in weighted backward shift invariant subspaces which
appear naturally in the context of kernels of Toeplitz operators \cite{AH10}.
We refer the reader to the survey \cite{FH} for more information.

The reader has probably noticed that we only discuss inner functions $I$ which are Blaschke products, i.e., $I$ has no singular inner factor. We will make some comments at the end of the paper as to the difficulties which arise in the the presence of a singular inner factor. 

A final word concerning numbering in this paper: in each section, we have numbered theorems, propositions,
lemmas, corollaries {\bf and} equations consecutively.

\section{Preliminaries}\label{S2}

For an inner function $I$, let $K_I = H^2 \ominus I H^2$ be the model space corresponding to $I$. Since $H^2$ is a reproducing kernel Hilbert space with kernel 
$$k_{\lambda}(z) = \frac{1}{1 - \overline{\lambda} z},$$ then so is $K_I$ with reproducing kernel 
$$k_{\lambda}^{I}(z) = (P_I k_{\lambda})(z) = \frac{1 - \overline{I(\lambda)} I(z)}{1 - \overline{\lambda} z},$$ where $P_I$ is the orthogonal projection of $L^2$ onto $K_I$. Note that these kernels are bounded functions and finite linear combinations of them form a dense subset of $K_I$. This enables us, for $\vp \in L^2$, to define the operator 
$A_{\vp}$ densely on $K_I$ by $A_{\vp} f = P_{I}(\vp f)$. These operators, called \emph{truncated Toeplitz operators}, have many interesting properties \cite{Sa07} which we won't get into here. We do, however, mention a few of them which will be important for our purposes. 

First we note that the symbols which represent truncated Toeplitz operators are not unique. In fact \cite[Thm.~3.1]{Sa07}
\begin{equation} \label{id-zero}
A_{\vp_1} = A_{\vp_2} \Leftrightarrow \vp_1 - \vp_2 \in I H^2 + \overline{I H^2}.
\end{equation}
Secondly, when $\vp$ is a bounded function then certainly the truncated Toeplitz operator $A_{\vp}$ extends to be a bounded operator on $K_I$ with $\|A_{\vp}\| \leq \|\vp\|_{\infty}$. However, there are bounded truncated Toeplitz operators (i.e., ones which extend to be bounded on $K_I$) which do not have a bounded symbol \cite{Bar}. 

In this paper, we focus our attention on the co-analytic truncated Toeplitz operator $A_{\overline{h}}$, where $h \in H^2$. As mentioned earlier, when $h \in H^{\infty}$, the bounded analytic functions on $\D$, then $A_{\overline{h}}$ is bounded on $K_I$. Although by using \eqref{id-zero} every bounded  $A_{\overline{h}}$ has an unbounded symbol,
%On the other hand, 
a well-known result of Sarason \cite{Sa67} says that if a co-analytic truncated Toeplitz operator is bounded, then it can be represented by a \emph{bounded} co-analytic symbol. 

The central step in the Ahern-Clark approach is to express the reproducing kernel $k^{I}_{\lambda}$
in terms of the resolvent of a certain operator in $\lambda$ applied to a fixed function:
\[
 k^I_{\lambda}=(\Id-\overline{\lambda} A_z)^{-1} P_{I} 1.
\]
 In this situation, the following lemma
allows to deduce the existence
of the boundary limits at a point $\zeta\in \T$ from the 
fact that $(\Id-\overline{\zeta} A_z)$ is injective and $P_I 1$ is in the range of this
operator.

\begin{lemma}[\cite{AC70}]\label{AClemma}
Let $\xi \in\T$ and $L$ be a contraction on a Hilbert space $\mathcal{H}$ such that 
$(\Id-\xi L)$ is injective.  Furthermore, let $\{\lambda_n\}_{n \geq 1}$  be sequence
of points in $\D$ tending non-tangentially to $\xi$ as $n\to\infty$. Then, for a fixed
 $y\in \mathcal{H}$, the sequence
\[
 w_n=(\Id-\lambda_n L)^{-1}y
\]
is uniformly bounded if and only if $y$ belongs to the range of
$(\Id-\xi L)$,
in which case, $w_n$ tends weakly to $w_0=(\Id-\xi L)^{-1}y$.
\end{lemma}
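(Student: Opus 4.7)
The plan is to exploit the algebraic identity
$$(\Id - \xi L) = (\Id - \lambda_n L) + (\lambda_n - \xi) L,$$
combined with two elementary bounds: the Neumann-series estimate $\|(\Id - \lambda_n L)^{-1}\| \leq (1 - |\lambda_n|)^{-1}$ (valid because $L$ is a contraction and $|\lambda_n|<1$) and the Stolz-region estimate $|\xi - \lambda_n| \leq C(1 - |\lambda_n|)$ furnished by non-tangential convergence. Together these yield the uniform bound $|\lambda_n - \xi| \, \|(\Id - \lambda_n L)^{-1}\| \leq C$.

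For the ``if'' direction, set $x_0 := (\Id - \xi L)^{-1} y$ and apply $(\Id - \lambda_n L)^{-1}$ to the displayed identity to obtain
$$w_n = x_0 + (\lambda_n - \xi)\,(\Id - \lambda_n L)^{-1} L x_0,$$
so that $\|w_n - x_0\| \leq C \, \|L x_0\|$; in particular $\{w_n\}$ is uniformly bounded. To upgrade this to weak convergence, I would extract from any subsequence a further weakly convergent one $w_{n_k}$ with weak limit $w$, and rewrite the defining relation $(\Id - \lambda_{n_k} L) w_{n_k} = y$ as
$$(\Id - \xi L) w_{n_k} = y - (\xi - \lambda_{n_k}) L w_{n_k}.$$
The right-hand side tends to $y$ in norm, since $\{L w_{n_k}\}$ is bounded and $\lambda_{n_k} \to \xi$, while the left-hand side tends weakly to $(\Id - \xi L) w$ by continuity of $\Id - \xi L$. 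Hence $(\Id - \xi L) w = y$, and injectivity forces $w = x_0$; uniqueness of the subsequential weak limit then gives weak convergence of the full sequence.

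The ``only if'' direction uses essentially the same computation in reverse: uniform boundedness of $\{w_n\}$ produces a weakly convergent subsequence $w_{n_k}$ with weak limit $w_0$, and passing to the limit in the same displayed equation gives $(\Id - \xi L) w_0 = y$, so that $y \in \Rng(\Id - \xi L)$ and $w_0 = (\Id - \xi L)^{-1} y$ by injectivity. The main (mild) subtlety is the weak-convergence step: the Neumann estimate gives uniform norm bounds immediately, but turning these into a well-defined weak limit requires injectivity of $\Id - \xi L$ to pin down subsequential limits. The non-tangential hypothesis enters only through the bound $|\xi - \lambda_n|/(1 - |\lambda_n|) \leq C$, which is precisely what controls the remainder term in the decomposition of $w_n$.
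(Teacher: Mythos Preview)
Your argument is correct. The paper does not actually prove this lemma: it is quoted from Ahern--Clark \cite{AC70} and used as a black box, so there is no ``paper's own proof'' to compare against. Your approach --- the resolvent identity $(\Id-\xi L)=(\Id-\lambda_nL)+(\lambda_n-\xi)L$ combined with the Neumann bound $\|(\Id-\lambda_nL)^{-1}\|\le(1-|\lambda_n|)^{-1}$ and the Stolz-region estimate $|\xi-\lambda_n|\le C(1-|\lambda_n|)$ --- is the standard one and is essentially what Ahern and Clark do in their original paper. The subsequence argument for identifying the weak limit via injectivity of $\Id-\xi L$ is also the expected route. Nothing is missing.
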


\begin{remark}
Below we will apply this lemma to the operator $A_z$ on $K_{I}$. Clearly $A_z$ is a contraction on $K_I$. To show that $(\Id - \xi A_{z})$ is injective, observe, for  $f \in K_I$, that
$$(\Id - \xi A_{z}) f = 0 \Leftrightarrow P_I ((1 - \xi z) f) = 0 \Leftrightarrow (1 - \xi z) f \in I H^2.$$
But since $z \mapsto (1 - \xi z)$ is an outer function, then $I$ divides the inner part of $f$ from which we get $f \in I H^2$ and so, since $f \in K_I = H^{2} \ominus I H^2$, $f \equiv 0$. 
\end{remark}

As mentioned in \eqref{formally}, for $h \in H^2$, the function 
$$k^{h}_{\lambda} = (\Id - \overline{\lambda} A_{z})^{-1} P_I h$$ serves as a reproducing kernel for 
$\Rng A_{\overline{h}}$ in the sense that
\begin{equation} \label{AhfI}
(A_{\overline{h}} f)(\lambda) = \langle f, k^{h}_{\lambda} \rangle, \quad f \in K_I.
\end{equation}
From this and the identity $$(A_{\overline{h}}f)(\lambda)=\langle P_I(\overline{h}f),k_{\lambda}\rangle
=\langle f,hk_{\lambda}^I\rangle=\langle f,P_I(hk_{\lambda}^I)\rangle, \quad \forall f \in K_I,$$
we also deduce that
\bea\label{klh}
 k_{\lambda}^h=P_I(hk_{\lambda}^I).
\eea

The next proposition, similar to Theorem \ref{AC-paper}, begins to get at the boundary behavior of functions in $\Rng A_{\overline{h}}$. The proof is pretty much the same but we include it anyway for the sake of completeness. 

\begin{proposition} \label{AC-Th}
For an inner function $I$, a point $\zeta \in \T$, and a function $h \in H^2$ so that $A_{\overline{h}}$ is bounded on $K_I$, the following are equivalent:
\begin{enumerate}
\item Every function in $\Rng A_{\overline{h}}$ has a finite non-tangential limit at $\zeta$. 
\item  $P_I h\in\Rng (\Id - \overline{\zeta} A_z)$.
\item $k^{h}_{\lambda}$ is norm bounded as $\lambda \to \zeta$ non-tangentially. 
\end{enumerate}
\end{proposition}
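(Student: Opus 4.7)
The plan is to close the loop by proving (2)$\Leftrightarrow$(3) and (1)$\Leftrightarrow$(3), leveraging Lemma \ref{AClemma} together with the reproducing identity \eqref{AhfI}.

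First I would handle (2)$\Leftrightarrow$(3). The argument given in the Remark following Lemma \ref{AClemma} carries over to show that $\Id-\overline{\zeta}A_z$ is injective on $K_I$, since $z\mapsto 1-\overline{\zeta}z$ is outer. Applying Lemma \ref{AClemma} to the contraction $L=A_z$ on $\mathcal H=K_I$, with $\xi=\overline{\zeta}$, $y=P_I h$, and any sequence $\overline{\lambda_n}\to\overline{\zeta}$ non-tangentially (equivalent to $\lambda_n\to\zeta$ non-tangentially), we conclude that $k^h_{\lambda_n}=(\Id-\overline{\lambda_n}A_z)^{-1}P_I h$ is uniformly bounded if and only if $P_I h\in\Rng(\Id-\overline{\zeta}A_z)$, in which case $k^h_{\lambda_n}$ converges weakly to $k^h_\zeta:=(\Id-\overline{\zeta}A_z)^{-1}P_I h$. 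This gives both directions between (2) and (3), and as a bonus identifies the weak limit of the kernels.

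For (3)$\Rightarrow$(1), assuming $\|k^h_\lambda\|$ stays bounded as $\lambda\to\zeta$ non-tangentially, the previous step yields weak convergence $k^h_\lambda\rightharpoonup k^h_\zeta$ along any such approach. Invoking \eqref{AhfI}, for each $f\in K_I$,
$$(A_{\overline{h}}f)(\lambda)=\langle f,k^h_\lambda\rangle\longrightarrow\langle f,k^h_\zeta\rangle,$$
so every function in $\Rng A_{\overline{h}}$ admits a finite non-tangential limit at $\zeta$.

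For the converse (1)$\Rightarrow$(3), I would rely on the uniform boundedness principle. Fix a non-tangential approach region at $\zeta$; if $\|k^h_\lambda\|$ were unbounded on it, one could extract a sequence $\lambda_n\to\zeta$ in that region with $\|k^h_{\lambda_n}\|\to\infty$. However (1) says that for every $f\in K_I$ the scalars $\langle f,k^h_{\lambda_n}\rangle=(A_{\overline{h}}f)(\lambda_n)$ converge, hence are bounded. Applying UBP to the family of functionals $f\mapsto\langle f,k^h_{\lambda_n}\rangle$ on the Hilbert space $K_I$ contradicts $\|k^h_{\lambda_n}\|\to\infty$. The main delicate point is precisely this step: one must argue that non-tangential boundedness of $\|k^h_\lambda\|$ (rather than mere sequential boundedness) follows, which is done by the standard subsequence extraction just outlined.
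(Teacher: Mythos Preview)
Your proof is correct and follows essentially the same route as the paper: it uses Lemma \ref{AClemma} (with $L=A_z$, $\xi=\overline{\zeta}$, $y=P_Ih$) for (2)$\Leftrightarrow$(3), then \eqref{AhfI} together with the weak convergence of $k^h_\lambda$ for (3)$\Rightarrow$(1), and finally \eqref{AhfI} plus the uniform boundedness principle for (1)$\Rightarrow$(3). Your write-up simply supplies more detail than the paper's terse three-line proof.
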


\begin{proof}
By \eqref{AhfI}, along with the uniform boundedness principle, we have (1) implies (3). Statement (3) is equivalent to (2) by Lemma \ref{AClemma}. Statement (3) implies (1) follows from Lemma \ref{AClemma} and \eqref{AhfI}.
\end{proof}

\begin{corollary} \label{C-int-prob}
The following statements are equivalent:
\begin{enumerate}
\item Every function in $\Rng A_{\overline{h}}$ has a finite non-tangential limit at $\zeta$. 
\item There exists $u \in H^2$ and $k \in K_I$ which solve the following interpolation problem
\begin{equation} \label{Pre-int}
P_I h = (1 - \overline{\zeta} z) k + I u.
\end{equation}
\end{enumerate}
\end{corollary}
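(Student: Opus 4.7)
The plan is to deduce this directly from Proposition \ref{AC-Th}, specifically the equivalence (1) $\Leftrightarrow$ (2) there, which asserts that statement (1) of the corollary holds if and only if $P_I h \in \Rng(\Id - \overline{\zeta} A_z)$. It therefore suffices to show that this membership is equivalent to the existence of $u \in H^2$ and $k \in K_I$ satisfying \eqref{Pre-int}.

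First I would unwind $(\Id - \overline{\zeta} A_z) k$ for $k \in K_I$. Since $zk \in H^2$, the orthogonal decomposition $H^2 = K_I \oplus I H^2$ gives a unique $v \in H^2$ with $zk = A_z k + I v$, so
$$(\Id - \overline{\zeta} A_z) k \;=\; k - \overline{\zeta}(zk - Iv) \;=\; (1 - \overline{\zeta} z)k + I(\overline{\zeta} v).$$
This immediately shows that every element of $\Rng(\Id - \overline{\zeta} A_z)$ has the form $(1-\overline{\zeta} z)k + I u$ with $u := \overline{\zeta} v \in H^2$; so the condition $P_I h \in \Rng(\Id - \overline{\zeta} A_z)$ yields the interpolation identity \eqref{Pre-int}.

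For the converse, suppose $P_I h = (1 - \overline{\zeta} z) k + I u$ for some $k \in K_I$ and $u \in H^2$. Subtracting the identity derived above yields
$$P_I h - (\Id - \overline{\zeta} A_z) k \;=\; I(u - \overline{\zeta} v).$$
The left-hand side lies in $K_I$ (both $P_I h$ and the image of $\Id - \overline{\zeta} A_z$ on $K_I$ are in $K_I$), while the right-hand side lies in $I H^2$. By uniqueness of the decomposition $H^2 = K_I \oplus I H^2$, both sides vanish and hence $P_I h = (\Id - \overline{\zeta} A_z) k \in \Rng(\Id - \overline{\zeta} A_z)$.

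The only delicate point is the book-keeping of the decomposition $zk = A_z k + Iv$ so that the scalar factor $\overline{\zeta}$ gets absorbed into an arbitrary $u \in H^2$; there is no substantive obstacle beyond this short computation.
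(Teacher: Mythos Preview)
Your proof is correct and follows essentially the same approach as the paper: reduce via Proposition~\ref{AC-Th} to the condition $P_I h \in \Rng(\Id - \overline{\zeta} A_z)$, then unwind this using the decomposition $zk = A_z k + Iv$ coming from $H^2 = K_I \oplus I H^2$. The only difference is presentational: the paper carries out the forward direction in the same way and then simply says ``reverse the argument'' for the converse, whereas you spell out the orthogonality argument showing both sides of $P_I h - (\Id - \overline{\zeta} A_z)k = I(u - \overline{\zeta} v)$ must vanish.
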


\begin{proof}
Assuming statement (1) holds, we can use Proposition \ref{AC-Th} along with Lemma \ref{AClemma} to say that $k^{h}_{\lambda}$ converges weakly to some $k_{\zeta}^{h} \in K_I$ as $\lambda \to \zeta$ non-tangentially and moreover,
$$k_{\zeta}^{h} = (\Id - \overline{\zeta} A_{z})^{-1} P_I h.$$
Using the  the general observation
$P_I(zv)-zv=(P_I-\Id)(zv)\in\Ker P_I=IH^2$ we see that 
\begin{align*}
P_I h & = (\Id - \overline{\zeta} A_{z}) k^{h}_{\zeta}\\
& = k^{h}_{\zeta} - \overline{\zeta} A_{z} k^{h}_{\zeta}\\
& = k^{h}_{\zeta} - \overline{\zeta} z k^{h}_{\zeta} + I u, \quad u \in H^2\\
& = (1 - \overline{\zeta} z) k^{h}_{\zeta} + I u.
\end{align*}
This shows that (1) implies (2). To show (2) implies (1), simply reverse the argument. 
\end{proof}

The above proof also says the following. 

\begin{corollary} \label{C-eval}
If $A_{\overline{h}} f$ has a finite non-tangential limit at $\zeta$ for every $f \in K_I$ then 
$$(A_{\overline{h}} f)(\zeta) = \langle f, k^{h}_{\zeta} \rangle.$$
\end{corollary}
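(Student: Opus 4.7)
The plan is to read Corollary \ref{C-eval} as an immediate by-product of the argument already carried out for Corollary \ref{C-int-prob}, combined with the reproducing-kernel identity \eqref{AhfI}. So I would not re-prove anything; I would just extract the weak-limit statement that is implicit in the previous proof and pair it with \eqref{AhfI}.

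More precisely, first I would invoke Proposition \ref{AC-Th}: the hypothesis says that $A_{\overline{h}} f$ has a finite non-tangential limit at $\zeta$ for every $f \in K_I$, which by the equivalence (1)$\Leftrightarrow$(3) of that proposition implies that $\{k^h_\lambda\}$ is norm bounded as $\lambda \to \zeta$ non-tangentially. Next I would apply Lemma \ref{AClemma} to $L = A_z$ on $\mathcal{H} = K_I$ with $y = P_I h$, using the remark that $(\Id - \overline{\zeta} A_z)$ is injective on $K_I$. The lemma then yields a vector $k^h_\zeta \in K_I$ (namely $k^h_\zeta = (\Id - \overline{\zeta} A_z)^{-1} P_I h$, which is exactly the vector produced in the proof of Corollary \ref{C-int-prob}) such that $k^h_\lambda \to k^h_\zeta$ weakly in $K_I$ as $\lambda \to \zeta$ non-tangentially.

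To conclude, I would fix $f \in K_I$ and pass to the limit in the identity
\[
(A_{\overline{h}} f)(\lambda) = \langle f, k^h_\lambda \rangle,
\]
which is \eqref{AhfI}. By the assumed existence of the non-tangential limit, the left-hand side converges to $(A_{\overline{h}} f)(\zeta)$, while weak convergence of $k^h_\lambda$ forces the right-hand side to converge to $\langle f, k^h_\zeta \rangle$. This gives the claimed formula.

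There is really no main obstacle: every ingredient has already been assembled in the preceding results. The only thing to be careful about is to note explicitly that the vector $k^h_\zeta$ appearing in the statement is the same one produced in the proof of Corollary \ref{C-int-prob}, so that Corollary \ref{C-eval} is a genuine complement to that statement rather than an independent construction; this also automatically yields the norm identity $\|k^h_\zeta\|^2 = \sum_{n\ge 1} |(A_{\overline{h}}\gamma_n)(\zeta)|^2$ announced in the discussion after Theorem \ref{MT-one}, once the Parseval expansion in the Takenaka-Malquist-Walsh basis is justified in Section \ref{S3}.
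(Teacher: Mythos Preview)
Your proposal is correct and follows essentially the same approach as the paper: both arguments obtain the weak convergence $k^h_\lambda \to k^h_\zeta$ from Lemma \ref{AClemma} (via Proposition \ref{AC-Th}) and then pass to the limit in the reproducing identity. The only cosmetic difference is that the paper routes the limit through \eqref{klh}, rewriting $\langle f, k^h_\lambda\rangle$ as $\langle \overline{h} f, k_\lambda\rangle$ before taking $\lambda\to\zeta$, whereas you invoke \eqref{AhfI} directly; since \eqref{klh} is itself derived from \eqref{AhfI}, your version is in fact slightly more economical.
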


\begin{proof}
In this situation, using \eqref{klh}, we will have,
for every $f\in K_I$,
\beqa\label{eq2p26}
 \langle f, k_{\zeta}^h\rangle
 &=& \angle \lim_{\lambda \to \zeta}\langle f,k_{\lambda}^h\rangle
 = \angle \lim_{\lambda \to \zeta}\langle f,P_I(hk_{\lambda})\rangle
 = \angle \lim_{\lambda \to \zeta}\langle f,hk_{\lambda}\rangle
 = \angle \lim_{\lambda \to \zeta}\langle \overline{h}f,k_{\lambda}\rangle\nn\\
 &=&(A_{\overline{h}}f)(\zeta). 
\eeqa
\end{proof}

\section{The main results}\label{S3}

\begin{remark}
Until we say otherwise, we will assume that $h \in H^2$ is chosen so that $A_{\overline{h}}$ is bounded on $K_I$. Furthermore,  by \eqref{id-zero},  $A_{\overline{h}} = A_{\overline{P_I h}}$ and so we will also assume that $h \in K_I$. 
\end{remark}

We will proceed as in \cite{AC70}. For a Blaschke product $I$ with zero set
$\Lambda=\{\lambda_n\}_{n\ge 1}$, we have already introduced the functions
\[
 \gamma_n(z)=\frac{\sqrt{1-|\lambda_n|^2}}{1-\overline{\lambda}z}
 \underbrace{\prod_{k=1}^{n-1}b_{\lambda_k}(z)}_{=:B_{n-1}(z)}
\]
which form an orthonormal basis for $K_I$.

It turns out that the central point in the result is the behavior of $A_{\overline{h}} \gamma_n$ at a boundary point.  This is what we will determine now.
Before proceeding though, we should justify that the
expression $(A_{\overline{h}}\gamma_n)(\zeta)$ is always defined.
First observe that $\gamma_n$ belongs to $K_{B_{n}}$, a finite dimensional subspace of rational functions whose poles lie outside $\D^{-}$. Moreover, $A_{\overline{h}} \gamma_n \in K_{B_{n}}$. 
This is because $A_{\overline{h}}$ acts on $K_{B_n}$
as the restriction of the co-analytic Toeplitz operator $T_{\overline{h}}$,
and $T_{\overline{h}}  K_{B_{n}} \subset K_{B_{n}}$. 
 Consequently, we can evaluate $A_{\overline{h}} \gamma_n$
at $\zeta\in\T$ without any difficulty.

\begin{proposition}\label{prop2.2}
let $\Lambda$ be a Blaschke sequence and $h\in H^2$. Then, writing $$\prod_{l=1}^n(z-\lambda_l)=\prod_{l=1}^r (z-\mu_l)^{k_l},$$
we have, for any $\zeta\in\T$, 
\bea\label{eq1p6a}
 (A_{\overline{h}}\gamma_n)(\zeta)
 =\sqrt{1-|\lambda_n|^2} \sum_{l=1}^r %\sum_{j=0}^{k_l-1}
 \frac{1}{(k_l-1)!} \overline{\frac{d^{k_l-1}}{d\mu_l^{k_l-1}}
 \left[\frac{h(\mu_l)\prod_{m=1}^{n-1}
  (1-\overline{\lambda_m}\mu_l)}{(1-\overline{\zeta}{\mu_l})
 \prod_{j=1,j\neq l}^r (\mu_l-\mu_j)^{k_j}}\right]}.
\eea
\end{proposition}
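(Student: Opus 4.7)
The plan is to reduce the problem to an honest Toeplitz action on the finite-dimensional model space $K_{B_n}$, to represent that action via the Cauchy integral, and then to evaluate it by residues after the reflection $w\mapsto 1/w$, which turns the boundary factor $\overline{h(w)}$ into an analytic function of a new variable. As noted in the discussion preceding the proposition, $T_{\overline{h}}$ preserves $K_{B_n}$, so $A_{\overline{h}}\gamma_n = T_{\overline{h}}\gamma_n = P_+(\overline{h}\gamma_n)$, and for $z\in\D$ the projection $P_+$ is realized by the Cauchy integral:
$$
(T_{\overline{h}}\gamma_n)(z) \;=\; \frac{1}{2\pi i}\oint_{\T}\frac{\overline{h(w)}\,\gamma_n(w)}{w-z}\,dw.
$$
On $\T$ one has $\overline{h(w)}=h^{\sharp}(1/w)$, where $h^{\sharp}(u):=\overline{h(\overline{u})}$ is analytic on $\D$. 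Substituting $u=1/w$ (which reverses orientation and introduces $dw=-du/u^2$) and using the direct computation
$$
\gamma_n(1/u) \;=\; \sqrt{1-|\lambda_n|^2}\,u\,\frac{\prod_{m=1}^{n-1}(1-\lambda_m u)}{\prod_{k=1}^{n}(u-\overline{\lambda_k})},
$$
the factors of $u$ combine cleanly, and after grouping the denominator by the distinct zeros $\mu_l$ with multiplicities $k_l$ the integral becomes
$$
\sqrt{1-|\lambda_n|^2}\cdot\frac{1}{2\pi i}\oint_{\T}\frac{h^{\sharp}(u)\prod_{m=1}^{n-1}(1-\lambda_m u)}{(1-uz)\prod_{l=1}^{r}(u-\overline{\mu_l})^{k_l}}\,du.
$$

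For $z\in\D$ the only singularities of this integrand inside $\D$ are the points $u=\overline{\mu_l}$, with pole order $k_l$; the pole at $u=1/z$ is outside $\overline{\D}$, and $h^{\sharp}$ is analytic on $\D$. The residue theorem (applied after retracting the contour slightly into $\D$ to deal with the fact that $h^{\sharp}$ is a priori only $L^2$ on $\T$) then yields
$$
(T_{\overline{h}}\gamma_n)(z) \;=\; \sqrt{1-|\lambda_n|^2}\sum_{l=1}^{r}\frac{1}{(k_l-1)!}\frac{d^{k_l-1}}{du^{k_l-1}}\!\left[\frac{h^{\sharp}(u)\prod_{m=1}^{n-1}(1-\lambda_m u)}{(1-uz)\prod_{j\neq l}(u-\overline{\mu_j})^{k_j}}\right]_{u=\overline{\mu_l}}.
$$
The right-hand side is a rational function of $z$ whose only poles are at the points $1/\overline{\mu_l}\notin\overline{\D}$, so it extends continuously to $z=\zeta\in\T$ and agrees there with $(T_{\overline{h}}\gamma_n)(\zeta)$ by continuity of the latter. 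To match the form of the statement, set
$$
G(v) \;:=\; \frac{h(v)\prod_{m=1}^{n-1}(1-\overline{\lambda_m}v)}{(1-\overline{\zeta}v)\prod_{j\neq l}(v-\mu_j)^{k_j}}.
$$
Using $h^{\sharp}(\overline{v})=\overline{h(v)}$, $\overline{1-\overline{\alpha}v}=1-\alpha\overline{v}$, $\overline{v-\mu_j}=\overline{v}-\overline{\mu_j}$, and the fact that $\zeta\in\T$ (so $\overline{1-\overline{\zeta}\overline{u}}=1-\zeta u$), one sees that the bracketed expression at $u=\overline{\mu_l}$ and $z=\zeta$ equals $\overline{G(\overline{u})}|_{u=\overline{\mu_l}}$. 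Since the holomorphic derivative satisfies $(d/du)^m\overline{G(\overline{u})}=\overline{G^{(m)}(\overline{u})}$, its $(k_l-1)$-st derivative at $u=\overline{\mu_l}$ equals $\overline{G^{(k_l-1)}(\mu_l)}$, which is precisely the conjugated $\mu_l$-derivative appearing in \eqref{eq1p6a}.

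The main obstacle is bookkeeping rather than conceptual difficulty. The reflection $w\mapsto 1/w$ must be executed carefully so that the orientation reversal and the Jacobian factor combine correctly with the factor of $u$ produced by $\gamma_n(1/u)$; the $n$ factors $\overline{\lambda_k}$ must be grouped into the $r$ distinct points $\overline{\mu_l}$ of multiplicity $k_l$ before applying the higher-order residue formula; and the final identification of the residue expression as a complex conjugate of a $\mu_l$-derivative of $G$ depends on the involution $h^{\sharp}(\overline{v})=\overline{h(v)}$ and on $|\zeta|=1$. Justifying the residue computation when $h^{\sharp}$ lies only in $H^2$ (and not in $H^\infty$) is routine, carried out by shrinking the contour just inside $\D$, where $h^{\sharp}$ is bona fide analytic and the pointwise computation is unambiguous.
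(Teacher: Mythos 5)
Your proof is correct and follows essentially the same route as the paper's: both reduce $(A_{\overline{h}}\gamma_n)(\zeta)$ to a contour integral involving $h$ (the paper by conjugating the inner product and using $\overline{z}=1/z$ on $\T$, you by the reflection $w\mapsto 1/w$ applied to the Cauchy integral of $\overline{h}\gamma_n$) and then evaluate it by the residue theorem at the distinct zeros $\mu_l$ (resp.\ $\overline{\mu_l}$) with their multiplicities $k_l$. The remaining steps---extending the resulting rational functions to $\zeta\in\T$ and undoing the conjugation---are the same bookkeeping in both arguments.
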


\begin{proof}
Since $A_{\overline{h}} = T_{\overline{h}}|K_I$ and $T_{\overline{h}} K_{B_n} \subset K_{B_n}$ we get, for $\lambda \in \D$, 
$$(A_{\overline{h}} \gamma_{n})(\lambda) = (T_{\overline{h}} \gamma_{n})(\lambda) = (P_{+} \overline{h} \gamma_n)(\lambda).$$
This last quantity is now equal to 
\[
\langle \overline{h}\gamma_n,k_{\lambda}\rangle
 =\sqrt{1-|\lambda_n|^2}\langle k_{\lambda_n}B_{n-1},hk_{\lambda}\rangle.
\]
We thus have to compute 
\[
 \langle k_{\lambda_n}B_{n-1},hk_{\lambda}\rangle
 =\int_{\T}^{}\frac{1}{1-\overline{\lambda}_nz}\prod_{l=1}^{n-1}
 \frac{z-\lambda_l}{1-\overline{\lambda_l}z} \overline{h(z)}\frac{1}{1-\lambda\overline{z}}
 dm(z).
\]
Passing to the conjugate expression and then replacing the measure $dm = d \theta/(2 \pi)$ by $dz/(2 \pi i z)$ we get
\bea\label{eq1p6b}
 \overline{\langle k_{\lambda_n}B_{n-1},hk_{\lambda}\rangle}
 &=&\frac{1}{2\pi i}\int_{\T}^{}\frac{1}{1-\overline{z}\lambda_n}\prod_{l=1}^{n-1}
 \frac{1-\overline{\lambda_l}z}{z-\lambda_l}{h(z)}\frac{1}{1-\overline{\lambda}{z}}
 \frac{dz}{z}\nn\\
 &=&\frac{1}{2\pi i}\int_{\T}^{}\frac{1}{z-\lambda_n}\prod_{l=1}^{n-1}
 \frac{1-\overline{\lambda_l}z}{z-\lambda_l}{h(z)}\frac{1}{1-\overline{\lambda}{z}}
 dz\nn\\
 &=&\frac{1}{2\pi i}\int_{\T}^{}\prod_{l=1}^n\frac{1}{z-\lambda_l}
 \left[\frac{h(z)\prod_{j=1}^{n-1}
  (1-\overline{\lambda_j}z)}{1-\overline{\lambda}{z}}\right]
 dz.
\eea
Now let $\prod_{l=1}^n(z-\lambda_l)=\prod_{l=1}^r (z-\mu_l)^{k_l}$ where
$\mu_l$ are the \emph{different} zeros of $B_n$ and $k_l$ are their corresponding multiplicities.
Then from the residue theorem we obtain:
\beqa
 \overline{\langle k_{\lambda_n}B_{n-1},hk_{\lambda}\rangle}
  =\sum_{l=1}^r %\sum_{j=0}^{k_l-1}
 \frac{1}{(k_l-1)!} \frac{d^{k_l-1}}{d\mu_l^{k_l-1}}
 \left[\frac{h(\mu_l)\prod_{m=1}^{n-1}
  (1-\overline{\lambda_m}\mu_l)}{(1-\overline{\lambda}{\mu_l})\prod_{j=1,j\neq l}^r  (\mu_l-\mu_j)^{k_j}}\right].
\eeqa
This expression is perfectly well behaved for $\lambda \lra\zeta$, so
that by conjugating back and multiplying by the normalization constant $\sqrt{1 - |\lambda_n|^2}$, we obtain
the desired result. 
\end{proof}

In the situation of simple zeros we get a much nicer formula that we
will use in the example at the end of this paper.

\begin{corollary}\label{prop6.3}
Let $\Lambda$ be a Blaschke sequence with simple zeros. Then
we have, for each 
$\zeta\in\T$,
\[
 (A_{\overline{h}}\gamma_n)(\zeta)=\sqrt{1-|\lambda_n|^2}
  \sum_{l=1}^n \frac{\overline{h(\lambda_l)}}   
 {1-\overline{\lambda_l}{\zeta}}\frac{1}{\overline{(B_n)_{\lambda_l}(\lambda_l)}}
 \frac{1-|\lambda_l|^2}{1-\overline{\lambda_l}\lambda_n}
\]
where
\[
 (B_n)_{\lambda_l}=\prod_{k=1,k\neq l}^nb_{\lambda_k}.
\]
\end{corollary}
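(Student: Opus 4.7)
\begin{proof*}
\emph{Proof plan.} The plan is to specialize Proposition~\ref{prop2.2} to the case of simple zeros and then perform an elementary bookkeeping on the rational summand so as to identify the denominator factor $\overline{(B_n)_{\lambda_l}(\lambda_l)}$.

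When $\Lambda$ consists of simple zeros we have $r=n$, $\mu_l=\lambda_l$ and $k_l=1$ for every $l$, so each $(k_l-1)!=1$ and the derivative $d^{k_l-1}/d\mu_l^{k_l-1}$ in \eqref{eq1p6a} collapses to the identity. The formula of Proposition~\ref{prop2.2} therefore reduces to
\[
 (A_{\overline{h}}\gamma_n)(\zeta) = \sqrt{1-|\lambda_n|^2} \sum_{l=1}^n \overline{\frac{h(\lambda_l)\prod_{m=1}^{n-1}(1-\overline{\lambda_m}\lambda_l)}{(1-\overline{\zeta}\lambda_l)\prod_{j=1,\,j\neq l}^n(\lambda_l-\lambda_j)}}.
\]

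Next I would distribute the conjugation over the factors, using $\overline{1-\overline{\zeta}\lambda_l}=1-\overline{\lambda_l}\zeta$, to turn the numerator and denominator products into $\prod_{m=1}^{n-1}(1-\lambda_m\overline{\lambda_l})$ and $\prod_{j\neq l}(\overline{\lambda_l}-\overline{\lambda_j})$ respectively. The key algebraic move is then to multiply and divide by $(1-|\lambda_l|^2)$ so as to insert the missing $m=l$ factor into the numerator product, and simultaneously by $(1-\overline{\lambda_l}\lambda_n)$ so as to extend the range of that product from $n-1$ up to $n$. After these adjustments the quotient of products rearranges into
\[
 \prod_{k=1,\,k\neq l}^n\frac{1-\lambda_k\overline{\lambda_l}}{\overline{\lambda_l}-\overline{\lambda_k}},
\]
and this equals $1/\overline{(B_n)_{\lambda_l}(\lambda_l)}$, since $\overline{b_{\lambda_k}(\lambda_l)}=(\overline{\lambda_l}-\overline{\lambda_k})/(1-\lambda_k\overline{\lambda_l})$. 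The factor $(1-|\lambda_l|^2)/(1-\overline{\lambda_l}\lambda_n)$ left over from the insertion is exactly the one appearing in the stated formula.

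The only delicate point is the boundary case $l=n$: there the numerator product already runs to $n-1$, the compensating factor $(1-\overline{\lambda_n}\lambda_n)$ coincides with $(1-|\lambda_n|^2)$, and both cancel; one checks directly that the identity persists, with $(B_n)_{\lambda_n}=B_{n-1}$. I do not anticipate any analytic obstacle, as Proposition~\ref{prop2.2} has already justified the residue computation and the expression is rational with all poles away from $\overline{\D}$; the only real work lies in carefully tracking indices and conjugations.
\end{proof*}
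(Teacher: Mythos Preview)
Your proposal is correct and follows essentially the same route as the paper: both specialize the residue formula to simple zeros and then insert the missing factors $(1-|\lambda_l|^2)$ and $(1-\overline{\lambda_l}\lambda_n)$ to regroup the rational product into $1/\overline{(B_n)_{\lambda_l}(\lambda_l)}$. The only cosmetic difference is that the paper works with the unconjugated expression \eqref{eq1p6b} and splits explicitly into $l\le n-1$ and $l=n$ before conjugating, whereas you conjugate first and handle both cases in one stroke; the algebra is identical.
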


The interesting observation here is that the expression $|(B_n)_{\lambda_l}(\lambda_l)|$
measures, in a sense, the deviation of $\Lambda$ from an interpolating sequence.
This will be very useful in our Example \ref{example4.3}.

\begin{proof}
Starting from the computation \eqref{eq1p6b} the residue theorem now gives:
\beqa
 \overline{\langle k_{\lambda_n}B_{n-1},hk_{\lambda}\rangle}
=\sum_{l=1}^n\frac{h(\lambda_l)\prod_{j=1}^{n-1}
  (1-\overline{\lambda_j}\lambda_l)}{1-\overline{\lambda}{\lambda_l}}\prod_{j=1,j\neq l}^n
 \frac{1}{\lambda_l-\lambda_j}.
\eeqa
We split the above sum in two pieces $l\le n-1$ and $l=n$ and do some
regrouping to get
\beqa
 \lefteqn{\sum_{l=1}^n\frac{h(\lambda_l)\prod_{j=1}^{n-1}
  (1-\overline{\lambda_j}\lambda_l)}{1-\overline{\lambda}{\lambda_l}}\prod_{j=1,j\neq l}^n
 \frac{1}{\lambda_l-\lambda_j}}\\
 &&=\sum_{l=1}^{n-1}\frac{h(\lambda_l)\prod_{j=1}^{n-1}
  (1-\overline{\lambda_j}\lambda_l)}{1-\overline{\lambda}{\lambda_l}}\prod_{j=1,j\neq l}^n
 \frac{1}{\lambda_l-\lambda_j}+\frac{h(\lambda_n)\prod_{j=1}^{n-1}
  (1-\overline{\lambda_j}\lambda_n)}{1-\overline{\lambda}{\lambda_n}}\prod_{j=1}^{n-1}
 \frac{1}{\lambda_n-\lambda_j}.
\eeqa
Now
\beqa
 \prod_{j=1}^{n-1}
  (1-\overline{\lambda_j}\lambda_l)\prod_{j=1,j\neq l}^n
 \frac{1}{\lambda_l-\lambda_j}
 &=&\frac{1-|\lambda_l|^2}{\lambda_l-\lambda_n}
 \prod_{j=1,j\neq l}^{n-1}\frac{1-\overline{\lambda_j}\lambda_l}{\lambda_l-\lambda_j}
 =\frac{1-|\lambda_l|^2}{\lambda_l-\lambda_n}
 \left(\prod_{j=1,j\neq l}^{n-1}\frac{1-\overline{\lambda_j}\lambda_l}{\lambda_l-\lambda_j}\right)
 \frac{1-\overline{\lambda_n}\lambda_l}{1-\overline{\lambda_n}\lambda_l}\\
 &=&\frac{1-|\lambda_l|^2}{1-\overline{\lambda_n}\lambda_l}
 \frac{1}{(B_n)_{\lambda_l}(\lambda_l)}.
\eeqa
Also,
\[
 \prod_{j=1}^{n-1}
  (1-\overline{\lambda_j}\lambda_n)\prod_{j=1}^{n-1}
 \frac{1}{\lambda_n-\lambda_j}
 =\frac{1}{(B_n)_{\lambda_n}(\lambda_n)}
 =\frac{1-|\lambda_n|^2}{1-\overline{\lambda_n}\lambda_n}
 \frac{1}{(B_n)_{\lambda_n}(\lambda_n)}.
\]
Hence
\beqa
 \lefteqn{\sum_{l=1}^n\frac{h(\lambda_l)\prod_{j=1}^{n-1}
  (1-\overline{\lambda_j}\lambda_l)}{1-\overline{\lambda}{\lambda_l}}\prod_{j=1,j\neq l}^n
 \frac{1}{\lambda_l-\lambda_j}}\\
 &&=\sum_{l=1}^{n-1}\frac{h(\lambda_l)} {(1-\overline{\lambda_j}\lambda_l)}
 \frac{1-|\lambda_l|^2}{1-\overline{\lambda_n}\lambda_l}
 \frac{1}{(B_n)_{\lambda_l}(\lambda_l)}
 +\frac{h(\lambda_n)}{ (1-\overline{\lambda_j}\lambda_n)}
\frac{1-|\lambda_n|^2}{1-\overline{\lambda_n}\lambda_n}
 \frac{1}{(B_n)_{\lambda_n}(\lambda_n)}
\eeqa
which concludes the proof.
\end{proof}

\begin{remark}
It is worth reminding the reader again that we are assuming $h \in K_I$ and $A_{\overline{h}}$ is bounded on $K_I$. 
\end{remark}

\begin{proof}[Proof of Theorem \ref{MT-one}]
By Corollary \ref{C-int-prob} the existence of finite non-tangential boundary limits of all 
functions in $\Rng A_{\overline{h}}$ is equivalent to the interpolation problem
of finding $k^h_{\zeta} \in K_I$ such that 
\bea\label{eq2p27}
 (1-\overline{\zeta}z)k^h_{\zeta}-h\in IH^2,
\eea
where $I$ is now a Blaschke product.

Let us use some ideas from \cite{AC70}. If there is a function $k^h_{\zeta}\in K_I$ 
satisfying \eqref{eq2p27}
then there are complex coefficients $c_n$ such that
\begin{equation} \label{qqq-1}
 k^h_{\zeta}=\sum_{n\ge 1} c_n\gamma_n
\end{equation}
with $\sum_{n\ge 1} |c_n|^2<\infty$.
In particular,
\[
 \overline{c}_n= \langle \gamma_n,k^h_{\zeta}\rangle .
\]
But since $\gamma_n\in K_I$ we can use Corollary \ref{C-eval} to get
\[
 \langle \gamma_n,k^h_{\zeta}\rangle=(A_{\overline{h}}\gamma_n)(\zeta)
\]
which proves the necessity.

Let us now prove the sufficiency. 

Assuming $\sum_{n\ge 1}|(A_{\overline{h}}\gamma_n)(\zeta)|^2<\infty$, 
we can define the function
\bea\label{deffctu}
 u=\sum_{n\ge 1} \overline{(A_{\overline{h}}\gamma_n)(\zeta)} \gamma_n
\eea
in $K_I$. In order to verify the interpolating condition in \eqref{eq2p27}, it is sufficient to
check that
\[
 u-\frac{h}{1-z\overline{\zeta}}
\]
vanishes to the right order, meaning that at each point $\lambda\in \Lambda$ these
differences vanish with order corresponding to the multiplicity of $\lambda$. 
The reader might observe that these differences are not necessarily in $H^2$. However,
it is clear that $h(z)/(1-\overline{\zeta}z)$ is controlled by $1/(1-\overline{\zeta}z)^{3/2}$
so that we can write the interpolation condition as 
\[ 
 u-\frac{h}{1-z\overline{\zeta}}\in I H^p
\]
for $p<2/3$, but we will not really use this formulation.

The proof of the interpolating condition will be very technical in the general
case. However, if the zeros are \emph{simple}, which we assume to be the case for the moment,  
then the formula for $A_{\overline{h}}\gamma_n(\zeta)$
in Corollary \ref{prop6.3} simplifies the argument considerably.
In this situation, we have
\[
 (A_{\overline{h}}\gamma_n)(\zeta)=\sqrt{1-|\lambda_n|^2}
  \sum_{l=1}^n \frac{\overline{h(\lambda_l)}}   
 {1-\overline{\lambda_l}{\zeta}}\frac{1}{\overline{(B_n)_{\lambda_l}(\lambda_l)}}
 \frac{1-|\lambda_l|^2}{1-\overline{\lambda_l}\lambda_n}.
\]
Hence using Fubini's theorem we get, for each $N\in\N$,
\bea\label{eq1p9}
  u(\lambda_N)&=&\sum_{n=1}^N \overline{(A_{\overline{h}}\gamma_n)(\zeta)} 
  \gamma_n(\lambda_N)\nn\\
 &=&\sum_{n=1}^N \sqrt{1-|\lambda_n|^2}
  \sum_{l=1}^n \frac{{h(\lambda_l)}}   
 {1-{\lambda_l}\overline{\zeta}}\frac{1}{{(B_n)_{\lambda_l}(\lambda_l)}}
 \frac{1-|\lambda_l|^2}{1-\overline{\lambda_n}\lambda_l}
 \frac{\sqrt{1-|\lambda_n|^2}}{1-\overline{\lambda_n}\lambda_N}
 B_{n-1}(\lambda_N)\nn\\
 &=&\sum_{l=1}^N \frac{{h(\lambda_l)}}   
  {1-{\lambda_l}\overline{\zeta}}
 \underbrace{\sum_{n=l}^N 
   \frac{1-|\lambda_l|^2}{1-\overline{\lambda_n}\lambda_l}
 \frac{{1-|\lambda_n|^2}}{1-\overline{\lambda_n}\lambda_N}
 \frac{B_{n-1}(\lambda_N)}{{(B_n)_{\lambda_l}(\lambda_l)}}}_{\dst =:\alpha_{l,N}}.
\eea
So, in order to show the interpolation condition $u(\lambda_N)=h(\lambda_N)/(1-\overline{\zeta}
\lambda_N)$ it suffices to show that 
\[
 \alpha_{l,N}=\left\{\begin{array}{ll}
 1 & \mbox{if }l=N\\
 0 &\mbox{if }l<N.
 \end{array}
 \right.
\]
Clearly, if $l=N$ then $\alpha_{N,N}=1$ (observe in particular that 
$l=n=N$ and $(B_N)_{\lambda_N}=B_{N-1}$).

Now let $k_{r\xi}(z)=1/(1-r\overline{\xi}z)$ be the reproducing kernel for $H^2$ at $r\xi$ for
any $\xi\in\T$. Let $P_{B_N}$ be the orthogonal projection onto $K_{B_N}$ which can be written explicitly using the Takenaka-Malmquist-Walsh basis so that 
\[
 v_r:=P_{B_N}k_{r\xi} =\sum_{n=1}^n\langle k_{r\xi},\gamma_n\rangle\gamma_n
 =\sum_{n=1}^N\overline{\gamma_n(r\xi)}\gamma_n.
\]
Since $v_r-k_{r\xi}\in \ker P_{B_N}=B_NH^2$ we get $v_r(\lambda_n)=k_{r\xi}(\lambda_n)$
for $n=1,\ldots,N$. All functions involved are rational function with no poles on $\D^{-}$ so that we can pass
to the limit as $r\to 1^{-}$ so that
\[
 v(\lambda_n)=\lim_{r\to 1^{-}}v_r(\lambda_n)=\frac{1}{1-\overline{\xi}\lambda_n},
 \quad n=1,2,\ldots, N.
\]
Notice also that 
\[
 v=\sum_{n=1}^N\overline{\gamma_n(\xi)}\gamma_n
 =\sum_{n=1}^N\overline{(A_{\overline{1}}\gamma_n)(\xi)}\gamma_n.
\]
Replacing the function $h$ by 1 in \eqref{eq1p9}, we obtain
\beqa
  v(\lambda_N)
 =\sum_{l=1}^N \frac{{1}}   
  {1-{\lambda_l}\overline{\xi}}
 \underbrace{\sum_{n=l}^N 
   \frac{1-|\lambda_l|^2}{1-\overline{\lambda_n}\lambda_l}
 \frac{{1-|\lambda_n|^2}}{1-\overline{\lambda_n\lambda_N}}
 \frac{B_{n-1}(\lambda_N)}{{(B_n)_{\lambda_l}(\lambda_l)}}}_{\dst =\alpha_{l,N}},
\eeqa
and since $v(\lambda_N)=1/(1-\overline{\zeta}\lambda_N)$ and $\alpha_{N, N} = 1$, we get
\[
 \sum_{l=1}^{N-1} \frac{{1}}   
  {1-{\lambda_l}\overline{\xi}}
  \alpha_{l,N}=0
\]
for every $\xi$. The reproducing kernels for different $\xi$ are linearly independent,
so that the coefficients $\alpha_{l,N}$ must necessarily vanish for $l=1,2,\ldots, N-1$,
which finishes the proof for simple zeros.
\\

The reader might observe that the explicit form of $\alpha_{l,N}$ is not really
of importance (well, it is, of course...). The central point is that $\alpha_{N,N}=1$.
We will now generalize this argument to the case of arbitrary Blaschke products.
As to be expected, the proof is more technical.
\\

For the proof in the general situation,
let  $\mu=\lambda_{N+1}$ be any point of the sequence such that $\mu\neq \lambda_l$
for every $l\le N$. It is the first time we meet this zero. Suppose also that $\mu$ has 
multiplicity $k_0$. We have to show that for every $1\le k \le k_0$,
\[
  u^{(k-1)}(\mu)=\left(\frac{h}{1-\overline{\zeta}z}\right)^{(k-1)}(\mu).
\]
Let us compute the derivatives of $u$. 
Let $\prod_{n=1}^{N+k}(z-\lambda_n)=\prod_{l=1}^r(z-\mu_l)^{k_l}$
where $\mu_r=\mu$ and $k_r=k$
(and not $k_0$). Evaluating the $(k-1)$-st derivative of the function $u$, as defined 
in \eqref{deffctu}, at $\mu$ needs only to take into account the first $N+k$ terms
of the sum since for $n\ge N+k+1$, $\gamma_n$ has a zero of sufficiently high order
at $\mu$ that $\gamma_n^{(k-1)}(\mu)=0$. Thus from \eqref{eq1p6b} we get
\beqa
 \lefteqn{u^{(k-1)}(\mu)=\sum_{n=1}^{N+k}\overline{(A_{\overline{h}}\gamma_n)(\zeta)}
 \gamma_n^{(k-1)}(\mu)}\\
 &&=\sum_{n=1}^{N+k}(1-|\lambda_n|^2)\sum_{l=1}^r\frac{1}{(k_l-1)!}
  \frac{d^{k_l-1}}{d\mu_l^{k_l-1}}
 \left[\frac{h(\mu_l)\prod_{m=1}^{n-1}
  (1-\overline{\lambda_m}\mu_l)} {(1-\overline{\zeta}{\mu_l})
  \prod_{j=1,j\neq l}^r  (\mu_l-\mu_j)^{k_j} }\right]
 \left[k_{\lambda_n}B_{n-1}\right]^{(k-1)}(\mu)\\
 &&=\sum_{n=1}^{N+k}(1-|\lambda_n|^2)\sum_{l=1}^r\frac{1}{(k_l-1)!}
 \sum_{p=0}^{k_l-1}\binom{k_l-1}{p} 
 \frac{d^{p}}{d\mu_l^{p}}
 \left[\frac{h(\mu_l)}{1-\overline{\zeta}{\mu_l}  } \right]\times
 \\
 &&\hspace{2cm}\times \frac{d^{k_l-1-p}}{d\mu_l^{k_l-1-p}}
  \left[ \frac{\prod_{m=1}^{n-1}
  (1-\overline{\lambda_m}\mu_l)} {(1-\overline{\lambda}{\mu_l})
  \prod_{j=1,j\neq l}^r  (\mu_l-\mu_j)^{k_j} }\right]
 \left[k_{\lambda_n}B_{n-1}\right]^{(k-1)}(\mu).
\eeqa
We will now apply Fubini's theorem. In order to do this, we observe that the double
sum $\sum_{l=1}^r\sum_{p=0}^{k_l-1}$ runs exactly through the zeros
$\lambda_n$, $n=1,2,...,N+k$. Let us define a function in two variables by
\[
 \sigma(l,p)=(p+1)+\sum_{j=1}^{l-1}k_l
\]
which is a bijection of a disjoint union of sets $\tau_l=\{0,1\ldots,k_l-1\}$, $l=1,\ldots, r$
to the set $\{1,2,\ldots,N+k\}$.
Hence
\bea\label{eq1p9a}
 \lefteqn{u^{(k-1)}(\mu)=\sum_{n=1}^{N+k}\overline{(A_{\overline{h}}\gamma_n)(\zeta)}
 \gamma_n^{(k-1)}(\mu)}\nn\\
 &&=\sum_{l=1}^r\frac{1}{(k_l-1)!}
 \sum_{p=0}^{k_l-1}\binom{k_l-1}{p} 
 \frac{d^{p}}{d\mu_l^{p}}
 \left[\frac{h(\mu_l)}{1-\overline{\zeta}{\mu_l}  } \right]\times\nn\\
 && \hspace{2cm}\times\sum_{n=\sigma(l,p)}^{N+k}(1-|\lambda_n|^2)
  \frac{d^{k_l-1-p}}{d\mu_l^{k_l-1-p}}
  \left[ \frac{\prod_{m=1}^{n-1}
  (1-\overline{\lambda_m}\lambda_l)} {
  \prod_{j=1,j\neq l}^r  (\mu_l-\mu_j)^{k_j} }\right]
 \left[k_{\lambda_n}B_{n-1}\right]^{(k-1)}(\mu).
\eea

Let us investigate the term we are particularly interested in for the
interpolation problem. It corresponds to the very last term: $l=r$ and $p=k_r-1=k-1$.
In this situation, $n=\sigma(r,k-1)=N+k$.
We compute the last factor:
\[
 \left[k_{\lambda_n}B_{n-1}\right]^{(k-1)}(\mu)
 =\sum_{p=0}^{k-1}\binom{k-1}{p} k_{\lambda_{N+k}}^{(p)}B_{N+k-1}^{(k-1-p)}(\mu).
\]
Now $B_{N+k-1}=b_{\mu}^{k-1}\prod_{l=1}^{r-1}b_{\mu_l}^{k_l}$ so that all
derivatives up to order $k-2$ of this product evaluated at $\mu$ will vanish
and
\[
 B_{N+k-1}^{(k-1)}(\mu)=(b_{\mu}^{k-1})^{(k-1)}(\mu)\prod_{l=1}^{r-1}b_{\mu_l}^{k_l}(\mu).
\]
It is well known, and easy to verify (e.g.\ using once again the Leibniz rule), that
\[
 (b_{\mu}^{k-1})^{(k-1)}(\mu)=\frac{(k-1)!}{(1-|\mu|^2)^{k-1}}.
\]
Hence 
\[
 \left[k_{\lambda_n}B_{n-1}\right]^{(k-1)}(\mu)
 =k_{\mu}(\mu)\frac{(k-1)!}{(1-|\mu|^2)^{k-1}}\prod_{l=1}^{r-1}b_{\mu_l}^{k_l}(\mu)
 =\frac{(k-1)!}{(1-|\mu|^2)^{k}}\prod_{l=1}^{r-1}b_{\mu_l}^{k_l}(\mu).
\]
We are now in a position to compute the coefficient of the term
$\frac{\dst d^{k-1}}{\dst d\mu^{k-1}}\frac{\dst h(\mu)}{\dst 1-\overline{\zeta}\mu}$
(corresponding to $l=r$, $k_l=k$, $p=k-1$, and hence, as already seen, $n=\sigma(l,p)=N+k$,
$\lambda_{N+k}=\mu$). It is
given by
\bea\label{eq2p10}
\lefteqn{ \frac{1}{(k-1)!}\binom{k-1}{k-1}(1-|\mu|^2)
 \frac{d^{k-1-p}}{d\mu^{k-1-p}}
  \left[ \frac{\prod_{m=1}^{n-1}
  (1-\overline{\lambda_m}\mu)} {
  \prod_{j=1,j\neq l}^r  (\mu-\mu_j)^{k_j} }\right]
 \left[k_{\lambda_n}B_{n-1}\right]^{(k-1)}(\mu)}\nn\\
 &&= \frac{1}{(k-1)!}(1-|\mu|^2)
  \left[ \frac{\prod_{m=1}^{r-1}
  (1-\overline{\mu_m}\mu)^{k_m}(1-|\mu|^2)^{k-1}} {
  \prod_{j=1}^{r-1}  (\mu-\mu_j)^{k_j} }\right]
 \frac{(k-1)!}{(1-|\mu|^2)^{k}}\prod_{l=1}^{r-1}b_{\mu_l}^{k_l}(\mu)\nn\\
 &&=1.
\eea
Hence we are led to show that the remaining sum adds up to 0. For this,
it is sufficient to show that for every $l=1,...,r-1$, $p=0,...,k_l-1$ and
for $l=r$, $p=0,...,k-2$, we have
\bea\label{eq1p10}
 \sum_{n=\sigma(l,p)}^{N+k}(1-|\lambda_n|^2)
  \frac{d^{k_l-1-p}}{d\mu_l^{k_l-1-p}}
  \left[ \frac{\prod_{m=1}^{n-1}
  (1-\overline{\lambda_m}\lambda_l)} {
  \prod_{j=1,j\neq l}^r  (\mu_l-\mu_j)^{k_j} }\right]
 \left[k_{\lambda_n}B_{n-1}\right]^{(k-1)}(\mu)=0.
\eea
The trick is the same as for simple zeros: redo all the computations from above
for the case $h=1$ for which we will deduce \eqref{eq1p10} from
the interpolating property.

For this, let $\xi\in\T$ and $0<r<1$. Set
\[
 v_{r}(z)=\sum_{n=1}^{N+k}\overline{(A_{\overline{1}}\gamma_n)(r\xi)}\gamma_n(z)
 =\sum_{n=1}^{N+k}\overline{\gamma_n(r\xi)}\gamma_n(z).
\]
Let us first check that $v_r$ interpolates what it should (while this is of course contained
in \cite{AC70} we add here a proof for completeness). To this end, as before,  let $k_{r\xi}$ be
the $H^2$ reproducing kernel at $r\xi\in\D$. Also let $P_{B_{N+k}}$ be the orthogonal
projection onto the space $K_{B_{N+k}}$. Using the Takenaka-Malmquist-Walsh
functions we obtain
\[
 P_{B_{N+k}}k_{r\xi}=\sum_{n=0}^{N+k}\langle k_{r\xi},\gamma_n\rangle\gamma_n
 =\sum_{n=0}^{N+k}\overline{\gamma_n(r\xi)}\gamma_n=v_r.
\]
Hence 
\[
 v_r-k_{r\xi}\in \ker P_{B_{N+k}}=B_{N+k}H^2.
\]
Now all these functions are rational functions with no poles in $\D$ so that we can
pass to the limit $r \to 1^{-}$ to obtain for $\mu$ and $1\le k\le k_0$ (same meaning 
of these parameters as in the first part of the proof),
\[
 v^{(k-1)}(\mu)= k_{\xi}^{(k-1)}(\mu)=\frac{d^{k-1}}{d\mu^{k-1}}k_{\xi}(\mu).
\]
(Note that again the difference $v-k_{\xi}$ is not in $H^2$ since $k_{\xi}$ is not.)

Exactly as in \eqref{eq1p9a} we obtain
\beqa
 \lefteqn{v^{(k-1)}(\mu)=\sum_{n=1}^{N+k}\overline{(A_{\overline{1}}\gamma_n)(\xi)}
 \gamma_n^{(k-1)}(\mu)}\\
 &&=\sum_{l=1}^r\frac{1}{(k_l-1)!}
 \sum_{p=0}^{k_l-1}\binom{k_l-1}{p} 
 \frac{d^{p}}{d\mu_l^{p}}
 \left[\frac{1}{1-\overline{\xi}{\mu_l}  } \right]\times\\
 && \hspace{2cm}\times\sum_{n=\sigma(l,p)}^{N+k}(1-|\lambda_n|^2)
  \frac{d^{k_l-1-p}}{d\mu_l^{k_l-1-p}}
  \left[ \frac{\prod_{m=1}^{n-1}
  (1-\overline{\lambda_m}\lambda_l)} {
  \prod_{j=1,j\neq l}^r  (\mu_l-\mu_j)^{k_j} }\right]
 \left[k_{\lambda_n}B_{n-1}\right]^{(k-1)}(\mu).
\eeqa
The leading coefficient for $l=r$ and $p=k_r-1=k-1$ has already been computed
in \eqref{eq2p10} to be 1. Hence subtracting the term corresponding
to the leading coefficient, we obtain (splitting the sum into the terms for $l\in\{1,2,\ldots,r-1\}$
and $l=r$)
\beqa
\lefteqn{0=\sum_{l=1}^{r-1}\frac{1}{(k_l-1)!}
 \sum_{p=0}^{k_l-1}\binom{k_l-1}{p} 
 \frac{d^{p}}{d\mu_l^{p}}
 \left[\frac{1}{1-\overline{\xi}{\mu_l}  } \right]\times}\\
 &&\hspace{2cm}\times\sum_{n=\sigma(l,p)}^{N+k}(1-|\lambda_n|^2)
  \frac{d^{k_l-1-p}}{d\mu_l^{k_l-1-p}}
  \left[ \frac{\prod_{m=1}^{n-1}
  (1-\overline{\lambda_m}\mu_l)} {
  \prod_{j=1,j\neq l}^r  (\mu_l-\mu_j)^{k_j} }\right]
 \left[k_{\lambda_n}B_{n-1}\right]^{(k-1)}(\mu)\\
 &&+\frac{1}{(k-1)!}
 \sum_{p=0}^{k-2}\binom{k-1}{p} 
 \frac{d^{p}}{d\mu^{p}}
 \left[\frac{1}{1-\overline{\xi}{\mu}  } \right]\times\\
 && \hspace{2cm}\times\sum_{n=\sigma(l,p)}^{N+k}(1-|\lambda_n|^2)
  \frac{d^{k_l-1-p}}{d\mu_l^{k_l-1-p}}
  \left[ \frac{\prod_{m=1}^{n-1}
  (1-\overline{\lambda_m}\mu_l)} {
  \prod_{j=1}^{r-1}  (\mu-\mu_j)^{k_j} }\right]
 \left[k_{\lambda_n}B_{n-1}\right]^{(k-1)}(\mu).\\
\eeqa
The above formula is valid for every $\xi\in \T$. Now, observe that the functions 
\[
 \xi\lmto \frac{d^{p}}{d\mu_l^{p}}
 \left[\frac{1}{1-\overline{\xi}{\mu_l}  } \right]
\]
form a linearly independant family for $l=1,...,r-1$, $p=0,...,k_l-1$ and
for $l=r$, $p=0,...,k-2$, implying that the coefficients
\[
\sum_{n=\sigma(l,p)}^{N+k}
  \frac{d^{k_l-1-p}}{d\mu_l^{k_l-1-p}}
  \left[ \frac{\prod_{m=1}^{n-1}
  (1-\overline{\lambda_m}\mu_l)} {
  \prod_{j=1}^{r-1}  (\mu-\mu_j)^{k_j} }\right]
 \left[k_{\lambda_n}B_{n-1}\right]^{(k-1)}(\mu)
\]
have to vanish in the required ranges of the parameters of $l,p$.
\end{proof}

\begin{remark} \label{remark-norm}
From the identity $k^{h}_{\lambda} = P_I (h k^{I}_{\lambda})$ from \eqref{klh}, and the fact that $\{\gamma_n: n \in \N\}$ forms an orthonormal basis for $K_I$, we see that
\beqa
\|k^{h}_{\lambda}\|^2 &=& \|P_{I} (h k^{I}_{\lambda})\|^2
  = \sum_{n \geq 1} |\langle P_I (h k^{I}_{\lambda}), \gamma_n \rangle|^2
 = \sum_{\geq 1} |\langle k^{I}_{\lambda}, \overline{h} \gamma_n \rangle|^2\\
&=& \sum_{\geq 1} |(A_{\overline{h}} \gamma_n)(\lambda)|^2.
\eeqa
From \eqref{qqq-1} it follows that 
$$\|k^{h}_{\zeta}\|^2 = \sum_{n \geq 1} |(A_{\overline{h}} \gamma_{n})(\zeta)|^2.$$
\end{remark}

\begin{proof}[Proof of Theorem \ref{MT-two}]
Instead of deriving this result from our main theorem, the idea is to use directly
the interpolation property as in \cite{AC70}. The existence of the boundary limits is as before
equivalent to the existence of the function $k^h_{\zeta}$. And from the interpolation
condition \eqref{eq2p27}  the existence of the function $k^h_{\zeta}$ is equivalent
to the solution of the problem
\[
 k^h_{\zeta}(\lambda_n)=\frac{h(\lambda_n)}{1-\overline{\zeta}\lambda_n},
 \quad n\ge 1.
\]
From Shapiro-Shields \cite{SS}, this is equivalent to 
\[
 \sum_{n\ge 1}(1-|\lambda|^2)\left|\frac{h(\lambda_n)}{\zeta-\lambda_n}\right|^2<\infty.
\]
This proves the result.
\end{proof}

\section{Some examples}\label{examples}

Recall that a truncated Toeplitz operators with co-analytic symbol $\overline{h}$ is just 
the restriction of the regular Toeplitz operator with symbol $\overline{h}$ to a model space $K_{I}$. 
Let us discuss some simple examples which illustrate the smoothing effects of applying Toeplitz
operators to functions in $K_I$ or even $H^2$.

\begin{example}\label{firstexample}
The following general fact is well known for functions $f$ in $H^2$ :
\bea\label{estimf}
 |f(z)|  = O\left(\frac{1}{\sqrt{1 - |z|}}\right).
\eea 
It can actually be shown that this growth condition can be replaced in a non-tangential
approach region by a little-oh
condition:
\bea\label{estimflo}
 |f(\lambda)|=o\left(\frac{1}{\sqrt{1-|\lambda|}}\right),\quad \lambda\stackrel{\angle}{\lra}\zeta\in\T.
\eea
The notation $z\stackrel{\angle}{\lra}\zeta$ means that $z$ tends non tangentially to $\zeta$.
As a consequence, we observe that if $\vp$ is anaytic and $|\vp(z)|\le \sqrt{1-|z|}$ as 
$z\stackrel{\angle}{\lra}\zeta$ --- which is for instance the case when $\vp(z)=\sqrt{\zeta-z}$ ---
then every function in the range of the {\it analytic} Toeplitz operator
$T_{\vp}f$, where actually $f\in H^2$, will have a boundary limit
(zero) at $\zeta$.
\end{example}

\begin{example}
 The situation is more intricate 
when considering co-analytic symbols.
A simple observation is the following: If $h(z)={1-z}$, then for every function
$f\in H^2$,
\[
 T_{\overline{h}}f(z)=T_{\overline{1-z}}f(z)=f(z)-\frac{f(z)-f(0)}{z}=\frac{f(z)(z-1)}{z}-\frac{f(0)}{z}
\]
which tends in fact to $-f(0)$ (and which is in general not 0)
when $z\stackrel{\angle}{\lra}1$ (we have used \eqref{estimf}).

\end{example}

\begin{example} \label{ex4.1}
In this example we use Theorem \ref{MT-two} to show that the
natural multiplier $\sqrt{1-z}$, which makes every function in $H^2$ vanish
non-tangentially at 1 (as observed in Example \ref{firstexample}),
is not sufficient for co-analytic Toeplitz operators.

Let $\Lambda=(1-\frac{1}{2^n})_{n\ge 1}$, $I$ be the Blaschke product with these zeros, and $h_{\eps}(z)=(1-z)^{1/2+\eps}$. Then
every function $f\in \Rng A_{\overline{h_{\eps}}}$ has non-tangential limit in 1, if and only if the
condition in \eqref{eq1p29} is fulfilled. Now observe that
\[
 \sum_{n\ge 1}(1-|\lambda_n|^2)\left|\frac{h_{\eps}(\lambda_n)}{1-\lambda_n}\right|^2
 \simeq \sum_{n\ge 1}2^{-n}\left|\frac{1/2^{n(1/2+\eps)}}{1/2^n}\right|^2
 =\sum_{n\ge 1}2^{-2n\eps}
\] 
which converges if and only if $\eps>0$. So we need the symbol $h$ to decrease faster than $\sqrt{1-z}$ to ensure existence of the boundary limits of functions in $\Rng A_{\overline{h}}$.

It is possible to consider a decrease closer to $\sqrt{1-z}$, for
example $$h(z)=\log 2 \frac{\sqrt{1-z}}{(-\log(1-z))}$$ (for which $h(\lambda_n)=1/(n2^{n/2})$),
but we can never reach $\sqrt{1-z}$.
\end{example}

\begin{example}\label{example4.2}
In this next proposition, we see  that Theorem \ref{MT-two} is not true for non-interpolating Blaschke sequences.
\end{example}

\begin{proposition}\label{prop4.1}
There exists a point $\zeta \in \T$,  a Blaschke product $I$ whose zeros $\Lambda\subset \D$ satisfy the condition \eqref{eq1p29} at $\zeta$, a function $h\in K_I$ such that $A_{\overline{h}}$ is bounded
on $K_I$. 
Still there are functions in $A_{\overline{h}}K_I$ which do not have finite non-tangential boundary limits 
at  $\zeta$.
\end{proposition}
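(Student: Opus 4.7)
The strategy exploits the fact that \eqref{eq1p29} is a Shapiro--Shields type condition, sufficient for $H^2$-interpolation only when $\Lambda$ is interpolating; on a non-separated union of two interpolating sequences, the problem coming from Corollary \ref{C-int-prob} requires in addition a divided-difference bound. We arrange the latter to fail while \eqref{eq1p29} holds.

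Take $\zeta=1$ and two interpolating Blaschke sequences $\Lambda_1=\{a_n\}$ and $\Lambda_2=\{b_n\}$ accumulating tangentially at $\zeta$ with $\rho_n:=\rho(a_n,b_n)\to 0$ fast enough that $\Lambda:=\Lambda_1\cup\Lambda_2$ stays Blaschke but is not separated (hence not interpolating). Write $d_n:=1-|a_n|$ and $e_n:=|1-\overline{\zeta}\,a_n|$; place the sequence so that $\sum d_n/e_n^2<\infty$, i.e., \eqref{AC1} at $\zeta$ (the analogous bound for the $b_n$ is automatic from $\rho_n\to 0$). Set $I:=B_\Lambda$. A concrete workable choice is
\[
 d_n = 2^{-n},\qquad e_n = n\,2^{-n/2},\qquad \rho_n = n\,2^{-n},\qquad s_n := 2^{-n/2},
\]
realized by $a_n = (1-2^{-n})\exp(i\,n\,2^{-n/2})$ and $b_n$ obtained from $a_n$ by a tangential shift of order $n\,2^{-2n}$. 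Using the standard $H^2$-trace description for non-separated unions of two interpolating sequences, one produces $h\in H^2$ with $A_{\overline{h}}$ bounded and
\[
 h(a_n)=s_n,\qquad h(b_n)=0,
\]
since $\sum d_n|s_n|^2<\infty$ and the divided-difference bound $\sum d_n|s_n|^2/\rho_n^2<\infty$ both reduce to $\sum n^{-2}$ under these parameters. Replace $h$ by $P_I h\in K_I$ if necessary; this does not change $A_{\overline{h}}$ by \eqref{id-zero}.

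Condition \eqref{eq1p29} then reads $\sum d_n|s_n|^2/e_n^2\simeq\sum 2^{-n}/n^2<\infty$. On the other hand, by Corollary \ref{C-int-prob}, the existence of finite non-tangential boundary limits at $\zeta$ for every element of $\Rng A_{\overline{h}}$ is equivalent to the existence of $k^h_\zeta\in K_I\subset H^2$ with $k^h_\zeta(a_n)=s_n/(1-\overline{\zeta}\,a_n)$ and $k^h_\zeta(b_n)=0$. The same $H^2$-trace result, in its necessary direction applied to the pairs $(a_n,b_n)$ with $|a_n-b_n|\simeq \rho_n d_n$, forces
\[
 \sum_n \frac{d_n\,|s_n|^2}{\rho_n^2\,e_n^2}<\infty,
\]
which under our parameters is $\sum_n 2^n/n^4=\infty$, a contradiction. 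Hence no such $k^h_\zeta$ exists, and by Theorem \ref{MT-one} there is an $f\in K_I$ for which $A_{\overline{h}} f$ has no finite non-tangential limit at $\zeta$.

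The main technical obstacle is the two-directional use of the $H^2$-trace theorem for non-separated unions of two interpolating sequences, both to construct $h$ and to rule out $k^h_\zeta$, together with tuning the angular geometry so that all three sums (AC1 at $\zeta$, existence of $h$, and \eqref{eq1p29}) converge while the divided-difference sum diverges. An alternative route that stays entirely inside the paper's machinery is to use Corollary \ref{prop6.3} combined with Remark \ref{remark-norm}: the close pair $(a_n,b_n)$ produces a factor $|(B_n)_{a_n}(a_n)|^{-1}\simeq \rho_n^{-1}$ in the formula for $(A_{\overline{h}}\gamma_n)(\zeta)$, and isolating the dominant pair-contributions shows directly that $\sum_n|(A_{\overline{h}}\gamma_n)(\zeta)|^2=\infty$, again violating the criterion of Theorem \ref{MT-one}.
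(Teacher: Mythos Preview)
Your overall architecture---a non-separated union of two interpolating sequences, an $h$ prescribed on $\Lambda$ via the $H^2$-trace theorem for such unions, and failure of the divided-difference condition for the would-be kernel $k^h_\zeta$---is exactly the paper's. The problem is a fatal choice of geometry.

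You place $\Lambda$ tangentially so that the Ahern--Clark sum $\sum d_n/e_n^2$ converges at $\zeta$. But once \eqref{AC1} holds, Theorem~\ref{AC-paper} says \emph{every} function in $K_I$ has a finite non-tangential limit at $\zeta$; in particular every function in $\Rng A_{\overline{h}}\subset K_I$ does, and the conclusion of Proposition~\ref{prop4.1} becomes impossible. Equivalently: when \eqref{AC1} holds, $\|k_\lambda^I\|$ stays bounded as $\lambda\to\zeta$ non-tangentially, and since $k_\lambda^h=P_I(hk_\lambda^I)=A_h k_\lambda^I$ by \eqref{klh}, boundedness of $A_{\overline h}$ forces $\|k_\lambda^h\|\le\|A_{\overline h}\|\,\|k_\lambda^I\|$ to stay bounded too, so $k^h_\zeta$ exists by Proposition~\ref{AC-Th}.

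Your (correct) computation that $k^h_\zeta$ does \emph{not} exist is therefore, by contraposition, a proof that your $A_{\overline h}$ is unbounded---the one hypothesis you asserted without argument. One sees this directly as well: by Sarason's commutant lifting theorem (quoted in Section~\ref{S2}) a bounded co-analytic truncated Toeplitz operator has a bounded co-analytic symbol, i.e.\ some $g\in H^\infty$ with $g|_\Lambda=h|_\Lambda$; Schwarz--Pick then forces $|g(a_n)-g(b_n)|\lesssim \rho_n$, whereas your data give $|s_n|/\rho_n=2^{n/2}/n\to\infty$.

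The paper avoids this by taking $\Lambda$ \emph{radial}: $\lambda_n^1=1-2^{-n}$, $|b_{\lambda_n^1}(\lambda_n^2)|=1/n$, $h(\lambda_n^1)=1/(n2^{n/2})$, $h(\lambda_n^2)=0$. Then $e_n\simeq d_n=2^{-n}$, so \eqref{AC1} diverges; \eqref{eq1p29} still converges because $h(\lambda_n^1)$ decays fast enough; the divided differences $h(\lambda_n^1)/\rho_n=2^{-n/2}$ stay bounded (and the paper checks boundedness of $A_{\overline h}$ directly via the $\ell^2$-decomposition of $K_I$); yet the divided-difference sum for $k^h_\zeta$ diverges. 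The balancing act is between $|h(\lambda_n)|/\rho_n$ (must be bounded, for $A_{\overline h}$ bounded) and $\sum d_n|h(\lambda_n)|^2/(\rho_n^2|\zeta-\lambda_n|^2)$ (must diverge), which forces $|\zeta-\lambda_n|\simeq d_n$---a non-tangential approach, not a tangential one.
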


\begin{proof}
The proof of this result relies on a result concerning interpolation on finite unions
of interpolating sequences \cite{BNO, AH96}.
Let $\Lambda_1=\{\lambda_n^1\}_{n\ge 1}=\{1-1/2^n\}_{n\ge 1}$, 
which is 
an interpolating sequence \cite{Garnett} and let $\Lambda_2=\{\lambda_n^2\}_{n\ge 1}$ 
satisfy $|b_{\lambda_n^1}(\lambda_n^2)|= 1/n$. The sequence $\Lambda_2$ is a sufficiently small
perturbation of $\Lambda_1$ such that $\Lambda_2$ will also be interpolating. Also note that
the sequence $\Lambda:=\Lambda_1\cup\Lambda_2$ accumulates
non tangentially at $\zeta=1$.
Let
 $$v_n^1=\frac{1}{n2^{n/2}}\ , \quad v_n^2=0.$$
 The central result used here is the following:
a sequence of values $(w_n^k)_{n\ge 1;k=1,2}$ is a trace of a function $f\in H^2$
(or $K_I$) if and only if \cite{BNO, AH96} 
\bea\label{eq3p29}
 \sum_{n\ge 1}(1-|\lambda_n^1|^2)\left[|w_n^1|^2+\left|\frac{w_n^2-w_n^1}{b_{\lambda_n^1}
 (\lambda_n^2)}\right|^2\right]<\infty.
\eea

For the values $w_n^i=v_n^i$ that we have given above, we get:
\[
  \sum_{n\ge 1}(1-|\lambda_n^1|^2)\left[|v_n^1|^2+\left|\frac{v_n^2-v_n^1}{b_{\lambda_n^1}
 (\lambda_n^2)}\right|^2\right]
 \simeq\sum_{n\ge 1}\frac{1}{2^n}\left[\left(\frac{1}{n2^{n/2}}\right)^2
 +\left|\frac{1/(n2^{n/2})}{1/n}\right|^2\right]<\infty,
\]
and so that there is a function $h$ in $H^2$ or in $K_I$ taking the values $v_n^i$ at
$\lambda_n^i$.

Next we check the condition \eqref{eq1p29}. Note that since $h(\lambda_n^2)=v_n^2=0$,
we only have to sum over $\Lambda_1$. Indeed we get
\beqa 
 \sum_{n\ge 1}(1-|\lambda_n^1|^2)\left|\frac{h(\lambda_n^1)}{1-\lambda_n^1}\right|^2
 \simeq \sum_{n\ge 1}\frac{1}{2^n}\left|\frac{1/(n2^{n/2})}{1/2^n}\right|^2
 =\sum_{n\ge 1}\frac{1}{n^2}<\infty.
\eeqa

Let us check that the sequence defined by 
$$w_n^i:= \frac{h(v_n^i)}{\zeta-\lambda_n^i}, \quad n\ge 1,i=1,2,$$ cannot be realized
by a function in $K_I$ so that $k^h_{\zeta}$ does not exist and hence there are
functions in $A_{\overline{h}}K_I$ that do not admit boundary limits in $\zeta=1$.
In order to do so, we have to check that this sequence does not satisfy the condition
\eqref{eq3p29}. We compute to get 
\beqa
 \sum_{n\ge 1}(1-|\lambda_n^1|^2)\left[|w_n^1|^2+\left|\frac{w_n^2-w_n^1}{b_{\lambda_n^1}
 (\lambda_n^2)}\right|^2\right]
 &\simeq&\sum_{n\ge 1}\frac{1}{2^n}\left(\left|\frac{1/(n2^{n/2})}{1/2^n}\right|^2
 +\left|\frac{\frac{1/(n2^{n/2})}{1/2^n}-0}{1/n}\right|^2\right)\\
 &=&\sum_{n\ge 1}(\frac{1}{n^2}+1)=+\infty
\eeqa
so that this value sequence cannot be realized by a function in $K_I$.

We finally have to check that $A_{\overline{h}}$ is bounded on $K_I$. For this, note
that $K_I$ is an $l^2$-sum of $K_{B_n}$ where $B_n$ is the finite Blaschke product with zeros $\{\lambda_n^1,\lambda_n^2\}$ (see \cite[Theorem C3.2.14]{Nik2}). 
By this we mean that every $f \in K_I$ can be written as 
$$f = \sum_{n \geq 1} f_n, \quad f_n \in K_{B_n}, \quad \|f\|^2 \asymp \sum_{n \geq 1}  \|f_n\|^2.$$

We use the Takenaka-Malmquist-Walsh system to generate $K_{B_n}$:
\[
 \gamma_{n,1}(z)=\frac{\sqrt{1-|\lambda_n^1|^2}}{1-\overline{\lambda_n^1}z},
 \quad \gamma_{n,2}(z)=\frac{\sqrt{1-|\lambda_n^2|^2}}{1-\overline{\lambda_n^2}z}
 \frac{z-\lambda_n^1}{1-\overline{\lambda_n^1}z}.
\]
So, every function $f\in K_I$ can be written as
\[
 f=\sum_{n\ge 1}(\alpha_{n,1}\gamma_{n,1}+\alpha_{n,2}\gamma_{n,2})
\]
with $\|f\|^2\simeq\sum_{n\ge 1}|\alpha_{n,1}|^2+|\alpha_{n,2}|^2<\infty$.
Apply now $A_{\overline{h}}$ to this sum (we could start with finite sums and
check that we have a uniform norm control).
Clearly
\[
 A_{\overline{h}}\gamma_{n,1}=\overline{h(\lambda_n^1)}\gamma_{n,1}.
\]
The action of $A_{\overline{h}}\gamma_{n,2}$ can be deduced from Corollary \ref{prop6.3}.
We obtain
\[
 (A_{\overline{h}}\gamma_{n,2})(z)=\sqrt{1-|\lambda_n^2|^2}\left[
 \frac{\overline{h(\lambda_n^1)}}{1-\overline{\lambda_n^1}z}
  \frac{1}{\overline{b_{\lambda_n^2}(\lambda_n^1)}}\frac{(1-|\lambda_n^1|^2)}
  {1-\overline{\lambda_n^1}\lambda_n^2}  
+\frac{\overline{h(\lambda_n^2)}}{1-\overline{\lambda_n^1}z}
 \frac{1}{\overline{b_{\lambda_n^1}(\lambda_n^2)}}\right].
\]
Note that $h(\lambda_n^2)=v_n^2=0$, and hence
\[
 (A_{\overline{h}}\gamma_{n,2})(z)=
 \sqrt{1-|\lambda_n^2|^2}
 \frac{\overline{h(\lambda_n^1)}}{1-\overline{\lambda_n^1}z}
  \frac{1}{\overline{b_{\lambda_n^2}(\lambda_n^1)}}\frac{(1-|\lambda_n^1|^2)}
  {1-\overline{\lambda_n^1}\lambda_n^2}  = \beta_n\gamma_{n,1},
\]
where
\[
 \beta_n=\frac{\sqrt{1-|\lambda_n^2|^2}\sqrt{1-|\lambda_n^1|^2}}
  {1-\overline{\lambda_n^1}\lambda_n^2}
 \frac{\overline{h(\lambda_n^1)}}{\overline{b_{\lambda_n^2}(\lambda_n^1)}} .
\]
In view of the explicit values of $\lambda_n^i$, $h(\lambda_n^i)$ and 
$|b_{\lambda_n^2}(\lambda_n^1)|$,
the sequence $\{\beta_n\}_{n \geq 1}$ is bounded (it actually tends to zero quickly).
We thus get
\[
 A_{\overline{h}}f=\sum_{n\ge 1}\left(\alpha_{n,1}\overline{h(\lambda_n^1)}\gamma_{n,1}
 +\alpha_{n,2}\beta_n\gamma_{n,1}\right)
 =\sum_{n\ge 1}\left(\alpha_{n,1}\overline{h(\lambda_n^1)}
 +\alpha_{n,2}\beta_n\right)\gamma_{n,1}
\]
and hence, since $h$ is also bounded on $\Lambda_1$ (actually decreasing
very fast to 0),
\[
 \|A_{\overline{h}}f\|^2=\sum_{n\ge 1}|\overline{h(\lambda_n^1)}
 \alpha_{n,1}+\beta_n\alpha_{n,2}|^2
 \lesssim \sum_{n\ge 1}|\alpha_{n,1}|^2+|\alpha_{n,2}|^2\simeq \|f\|^2.
\]
\end{proof}

Note how in this example
$B$ does not have non-tangential limit at $\zeta=1$. Indeed, $B$ vanishes at its zeros and, in the
middle
between two successive pairs $\{\lambda_n^1,\lambda_n^2\}$ and $\{\lambda_{n+1}^1,
\lambda_{n+1}^2\}$, we are far from the elements of the two interpolating sequences
$\Lambda_1$ and $\Lambda_2$. Thus $B$ will be big at these points.

The second remark is that for $h(z)=1-z$ we have already seen that every function in
$\Rng A_{\overline{1-z}}$ will have a limit at $\zeta=1$. Choosing, as mentioned in Example
\ref{ex4.1},
$h(z)=\log 2\sqrt{1-z}/(-\log(1-z))$ (which gives exactly $h(\lambda_n^1)=1/(n2^{n/2})$)
and $w_n^i=h(\lambda_n^i)$, it can be checked that \eqref{eq3p29} is true so that for this
function $h$, every $f\in A_{\overline{h}}K_I$ has non tangential limit at $\zeta=1$.
If the reader prefers a function in $K_I$, it is sufficient to project $h$ into $K_I$ which
does not change the values on $\Lambda$.

The arguments given in the proof of Proposition \ref{prop4.1} indicate
how to adapt the construction to generalize Theorem \ref{MT-two} to
finite unions of interpolating sequences.

\begin{example}\label{example4.3}
In this final example, we apply Theorem \ref{MT-one} to a 
sequence which is not a finite union of interpolating sequences. Fix $\beta\in(1/2,1)$.
Let 
\[
 \lambda_n=1-\frac{1}{2^{n^{\beta}}},
\]
and let $B$ be the Blaschke product associated with
the sequence $\Lambda=\{\lambda_n\}_n$. 
Since the convergence of this sequence to $1$ is sub-exponential, there will be dyadic
intervals $[1-1/2^{n},1-1/2^{n+1}]$ in the radius $[0,1)$ containing arbitrarily big
numbers of elements of $\Lambda$ so that the associated measure 
$\sum_{n\ge 1}(1-|\lambda_n|^2)\delta_{\lambda_n}$ cannot be Carleson (see
\cite{Garnett} for more information on Carleson measures).

Let us first estimate $|B_{\lambda_n}(\lambda_n)|$ where $B_{\lambda_n}$ is the
Blaschke product associated with the sequence $\Lambda\setminus\{\lambda_n\}$.
In order to do these estimates, we will consider
\beqa
 \log|B_{\lambda_n}(\lambda_n)|^{-1}&=&\sum_{k\neq n} \log|b_{\lambda_k}(\lambda_n)|^{-1}
  =\sum_{k\neq n}\log\left|\frac{1/2^{n^{\beta}}-1/2^{k^{\beta}}}{1/2^{n^\beta}+1/2^{k^\beta}
 -1/2^{k^\beta+n^{\beta}}}\right|^{-1}\\
 &=&\sum_{k\neq n}\log\left|\frac{2^{n^{\beta}}+2^{k^{\beta}}-1}{2^{n^\beta}-2^{k^\beta}
 }\right|.
\eeqa
We can suppose that $n$ is large enough so that we do not have to worry about the $-1$
which occurs in the last numerator. 
We will now split the summation (in the index $k$) into 4 (or 2) pieces.

\underline{Case 1:} Consider $n+1\le k\le n+n^{1-\beta}$.
Then
\[
 \left|\frac{2^{n^{\beta}}+2^{k^{\beta}}}{2^{n^\beta}-2^{k^\beta}}\right|
 \simeq\left|\frac{2^{k^{\beta}}}{2^{n^\beta}-2^{k^\beta}}\right|
 =\frac{2^{k^{\beta}}}   {2^{k^{\beta}}-2^{n^{\beta}}}
 =\frac{1}{1-2^{n^{\beta}-k^{\beta}}}.
\]
Note that
\bea\label{eq1p31}
 0&\ge& n^{\beta}-k^{\beta}\ge n^{\beta}-(n+n^{1-\beta})^{\beta}
 =n^{\beta}-n^{\beta}(1+n^{-\beta})^{\beta}\nn\\
 &=&n^{\beta}-n^{\beta}(1+\beta/n^{\beta}+o(1/n^{\beta}))\nn\\
 &=&-\beta+o(1).
\eea
So, $-1<-\ln 2<-\beta\ln 2\lesssim (\ln 2)(n^{\beta}-k^{\beta})\le 0$ (where the
``$\lesssim$" is asymptotically, for $n\to \infty$, a ``$\le$"), and hence
\[
 2^{n^{\beta}-k^{\beta}}=e^{(\ln 2)(n^{\beta}-k^{\beta})}
 \simeq 1+(\ln 2)(n^{\beta}-k^{\beta}), %+o(n^{\beta}-k^{\beta})
\]
so that
\[
 \left|\frac{2^{n^{\beta}}+2^{k^{\beta}}}{2^{n^\beta}-2^{k^\beta}}\right|
 \simeq \frac{1}{1-2^{n^{\beta}-k^{\beta}}}\simeq \frac{1}{\ln 2(k^{\beta}-n^{\beta})}
\]
Now, setting $k=n+l$ with $l\in \{1,2,\ldots,n^{1-\eps}\}$ we get
\[
 (n+l)^{\beta}-n^{\beta}=n^{\beta}(1+\frac{l}{n})^{\beta}-n^{\beta}
 \simeq \frac{\beta l}{n^{1-\beta}}.
\]
Hence
\[
 \sum_{k=n+1}^{n+n^{1-\beta}}\log|b_{\lambda_k}(\lambda_n)|^{-1}
 \simeq \sum_{l=1}^{n^{1-\beta}}\log \frac{n^{1-{\beta}}}{\beta l\ln 2}.
\]
And switching back to the product we get
\[
 \prod_{k=n+1}^{n+n^{1-\beta}}|b_{\lambda_k}(\lambda_n)|^{-1}
 \simeq \left(\frac{n^{1-\beta}}{\beta\ln 2}\right)^{n^{1-\beta}}\frac{1}{(n^{1-\beta})!}.
\]
Using Stirling's formula
\[
 \frac{N^N}{N!}\simeq \frac{e^N}{\sqrt{2\pi N}},
\]
we obtain with $N=n^{1-\beta}$,
\bea\label{p32c1}
  \prod_{k=n+1}^{n+n^{1-\beta}}|b_{\lambda_k}(\lambda_n)|^{-1}
 \simeq \left(\frac{e}{\beta\ln 2}\right)^{n^{1-\beta}}\frac{1}{\sqrt{2\pi n^{1-\beta}}}
 \lesssim e^{cn^{1-\beta}}
\eea
for some suitable constant $c$.

\underline{Case 2:} Suppose now that $k\ge n+n^{1-\beta}$.
Observe
\[
 \left|\frac{2^{n^{\beta}}+2^{k^{\beta}}}{2^{n^{\beta}}-2^{k^{\beta}}}\right|
 =\left| 1+2\frac{2^{n^{\beta}}}{2^{k^{\beta}}-2^{n^{\beta}}}\right|.
\]
Then
\[
  \frac{2^{n^{\beta}}}{2^{k^{\beta}}-2^{n^{\beta}}}
 \le  \frac{2^{n^{\beta}}}{2^{(n+n^{1-\beta})^{\beta}}-2^{n^{\beta}}}
 =\frac{1}{2^{(n+n^{1-\beta})^{\beta}-n^{\beta}}-1},
\]
which, by similar computations as in \eqref{eq1p31}, is controlled by 
\[
 \frac{1}{2^{\beta}-1}.
\]
This enables us now to write
\[
 \log\left| 1+2\frac{2^{n^{\beta}}}{2^{k^{\beta}}-2^{n^{\beta}}}\right|
 \simeq\frac{2}{2^{k^{\beta}-n^{\beta}}-1}\lesssim \frac{2^{n^{\beta}}}{2^{k^{\beta}}}.
\]
Using the estimate
\[
 \int_{M}^{\infty}e^{-x^{\beta}}dx\simeq M^{1-\beta}e^{-M^{\beta}},
\]
we can compute
\[
 \sum_{k\ge n+n^{1-\beta}}\log\left| 1+2\frac{2^{n^{\beta}}}{2^{k^{\beta}}-2^{n^{\beta}}}\right|
 \lesssim 2^{n^{\beta}}\sum_{k\ge n+n^{1-\beta}}\frac{1}{2^{k^{\beta}}}
 \simeq 2^{n^{\beta}}(n+n^{1-\beta})^{1-\beta}\frac{1}{2^{(n+n^{1-\beta})^{\beta}}}
 \simeq n^{1-\beta}
\]
so that we also get
\[
 \prod_{k\ge n+n^{1-\beta}}|b_{\lambda_k}(\lambda_n)|^{-1}
 \le e^{cn^{1-\beta}}
\] 
for some suitable constant $c$.
\\

We will also include a brief discussion of the 
cases 3 --- $(n-n^{1-\eps})\le k\le n-1$ --- and 4 --- $1\le k\le (n-n^{1-\eps})$ ---
which are treated in essentially the same way.

\underline{Case 3:} Consider $n-n^{1-\beta}\le k\le n-1$.
Then
\[
 \left|\frac{2^{n^{\beta}}+2^{k^{\beta}}}{2^{n^\beta}-2^{k^\beta}}\right|
 \simeq\left|\frac{2^{n^{\beta}}}{2^{n^\beta}-2^{k^\beta}}\right|
 =\frac{1}{1-2^{k^{\beta}-n^{\beta}}}.
\]
Now %te that
\[%\label{eq1p31}
 0\ge k^{\beta}-n^{\beta}\ge (n-n^{1-\beta})^{\beta}-n^{\beta}
 \simeq-\beta+o(1)
\]
as in \eqref{eq1p31}.
So, $-1<-\ln 2<-\beta\ln 2\lesssim (\ln 2)(k^{\beta}-n^{\beta})\le 0$ (where the
``$\lesssim$" is asymptotically, for $n\to \infty$, a ``$\le$"), and 
we can conclude as in the case 1 to obtain %with $N=n^{1-\beta}$,
\beqa
  \prod_{k=n-n^{1-\beta}}^{n-1}|b_{\lambda_k}(\lambda_n)|^{-1}
 \simeq \left(\frac{e}{\beta\ln 2}\right)^{n^{1-\beta}}\frac{1}{\sqrt{2\pi n^{1-\beta}}}
 \lesssim e^{cn^{1-\beta}}
\eeqa
for some suitable constant $c$.

\underline{Case 4:} Suppose now that $k\le n-n^{1-\beta}$.
Observe
\[
 \left|\frac{2^{n^{\beta}}+2^{k^{\beta}}}{2^{n^{\beta}}-2^{k^{\beta}}}\right|
 =\left| 1+2\frac{2^{k^{\beta}}}{2^{n^{\beta}}-2^{k^{\beta}}}\right|.
\]
Then
\[
  \frac{2^{k^{\beta}}}{2^{n^{\beta}}-2^{k^{\beta}}}
 =\frac{1}{2^{n^{\beta}-k^{\beta}}-1}
  \le  \frac{1}{2^{n^{\beta}-(n-n^{1-\beta})^{\beta}}-1}
\]
which, by similar computations as in \eqref{eq1p31}, is controlled by 
\[
 \frac{1}{2^{\beta}-1}.
\]
This enables us now to write
\[
 \log\left| 1+2\frac{2^{k^{\beta}}}{2^{n^{\beta}}-2^{k^{\beta}}}\right|
 \simeq2 \frac{2^{k^{\beta}}}{2^{n^{\beta}}-2^{k^{\beta}}}
 \lesssim \frac{2^{k^{\beta}}}{2^{n^{\beta}}}.
\]
Using
\[
 \int_1^{M}e^{x^{\beta}}dx\simeq M^{1-\beta}e^{M^{\beta}}
\]
we can estimate
\[
 \sum_{k\le n-n^{1-\beta}}\log\left| 1+2\frac{2^{n^{\beta}}}{2^{k^{\beta}}-2^{n^{\beta}}}\right|
 \lesssim \frac{1}{2^{n^{\beta}}}\sum_{k\le n-n^{1-\beta}}{2^{k^{\beta}}}
 \simeq \frac{1}{2^{n^{\beta}}}(n-n^{1-\beta})^{1-\beta}{2^{(n-n^{1-\beta})^{\beta}}}
 \simeq n^{1-\beta}
\]
so that we also get
\[
 \prod_{k\le n-n^{1-\beta}}|b_{\lambda_k}(\lambda_n)|^{-1}
 \le e^{cn^{1-\beta}}
\] 
for some suitable constant $c$.
\\

Putting this all together we obtain
\[
 \delta_n:=|B_{\lambda_n}(\lambda_n)|\ge e^{-cn^{1-\beta}}
\]
for some suitable constant $c$.
\\

Let us now return to our problem of estimating $|A_{\overline{h}}\gamma_n(\zeta)|$.
From Proposition \ref{prop6.3}, we have
\[
 (A_{\overline{h}}\gamma_n)(\zeta)=\sqrt{1-|\lambda_n|^2}
  \sum_{l=1}^n \frac{\overline{h(\lambda_l)}}   
 {1-\overline{\lambda_l}{\zeta}}\frac{1}{\overline{(B_n)_{\lambda_l}(\lambda_l)}}
 \frac{1-|\lambda_l|^2}{1-\overline{\lambda_l}\lambda_n}
\]
Our Blaschke product constructed above accumulates at $\zeta=1$ and contains
only points in $(0,1)$. Let $h(z)=(1-z)^{1-\eps}$. Then
\[
 |(A_{\overline{h}}\gamma_n)(\zeta)|
 \lesssim\sqrt{1-\lambda_n}\sum_{l=1}^n\frac{(1-\lambda_l)^{1-\eps}}{1-\lambda_l}
 \frac{1}{|(B_n){\lambda_l}(\lambda_l)|}\frac{1-\lambda_l}{1-\lambda_l}
 \lesssim\sqrt{1-\lambda_n}\sum_{l=1}^n\frac{(1-\lambda_l)^{-\eps}}{\delta_l}
\]
Recall that $\lambda_n=1-\frac{1}{2^{n^\beta}}$ and $\delta_n\ge e^{-cn^{1-\beta}}$. Hence
\[
 |(A_{\overline{h}}\gamma_n)(\zeta)|
 \lesssim\frac{1}{2^{n^{\beta}/2}}\sum_{l=1}^n\frac{2^{\eps l^{\beta}}}{e^{-cl^{1-\beta}}}
 \lesssim \frac{1}{2^{n^{\beta}/2}}n{2^{\eps n^{\beta}}}{e^{cn^{1-\beta}}}
 =n2^{\eps n^{\beta}+c\ln 2 n^{1-\beta}-n^{\beta}/2}
\]
which is square summable as long as $\eps<1/2$ and $\beta>1/2$.

Note that again our zeros are contained in the radius $(0,1)$ and the function $h$ has to go
slightly faster to zero than the square root as in the situation when $\Lambda$ was an
interpolating Blaschke sequence in $(0,1)$.

Note also that in this example 
$$\angle \lim_{\lambda \to 1} B(\lambda) = 0.$$

\end{example}

\section{Unbounded operators}\label{S5}

For any $h \in H^2$ the truncated Toeplitz operator $A_{\overline{h}}$ turns out to be a closed,
densely defined operator on $K_I$ with a domain $\mathcal{D}(A_{\overline{h}})$ which contains $K_I \cap H^{\infty}$ \cite{Sa08}. If one looks closely at the proof of the two main theorems of this paper (Theorem \ref{MT-one} and Theorem \ref{MT-two}), one realizes that the 
sufficiency parts 
still hold but with $\Rng A_{\overline{h}}$ defined as $A_{\overline{h}} \mathcal{D}(A_{\overline{h}})$. 

Furthermore, 
the conditions given in these theorems are still sufficient for every $h \in H^2$ when $\Rng A_{\overline{h}}$, as defined in the previous paragraph, 
is replaced by the linear manifold $\{f_h: f \in K_I\}$, where 
$f_h$ is defined by the left-hand side of \eqref{formally}, i.e., 
$$
 f_h(\lambda):=\langle f, (\Id - \overline{\lambda} A_z)^{-1} P_I h \rangle.
$$
Repeating the argument in \eqref{formally0}, we can also write
\[
 f_h(\lambda)=\langle f,k_{\lambda}P_Ih \rangle.
\]
Note that the linear manifold $\{f_h: f \in K_I\}$ is not necessarily a subset of $K_I$. However,  
$$
f_{h}(\lambda) = \int_{\T} \frac{f(\xi)\overline{(P_Ih)(\xi)}}{1 - \overline{\xi} \lambda} dm(\xi) 
$$ 
is a Cauchy transform of the finite measure $f \overline{P_I h} dm$. Since Cauchy transforms of finite measures on the circle are known to belong to all the Hardy classes $H^p$ for $0 < p < 1$ \cite[p.~43]{CMR06}, we know that the non-tangential limits of $f_h$ exist almost everywhere. Theorems \ref{MT-one} and \ref{MT-two} give sufficient conditions when these non-tangential limits exist at specific points of the circle. 

\section{Open questions}\label{S6}

Conspicuously missing from this paper is a discussion of what happens to $\Rng A_{\overline{h}}$ when $I$ is a general inner function $I = B s_{\mu}$ and not necessarily a Blaschke product as was discussed here. In this case, if we are aiming for a similar characterization as in Theorems \ref{MT-one} and \ref{MT-two}, we would need a different orthonormal basis than $\{\gamma_n: n \in \N\}$. So suppose that $\{\vp_n: n \in \N\}$ is an orthonormal basis for $K_I$ for a general inner function $I$. Some examples can be found in  \cite{AC70a}. Then Proposition \ref{AC-Th} still holds and so the non-tangential boundary values at a fixed point $\zeta \in \T$ will exist for all functions from $\Rng A_{\overline{h}}$ if and only if the kernel functions $k^{h}_{\lambda}$ remain bounded whenever $\lambda \to \zeta$ non-tangentially. The exact same computation as in Remark \ref{remark-norm} will show that 
$$\|k^{h}_{\lambda}\|^2 = \sum_{n \geq 1} |(A_{\overline{h}} \vp_n)(\lambda)|^2.$$
At this point, two problems stand in our way. The first is to prove that 
 $(A_{\overline{h}} \vp_n)(\zeta)$ exists as it did so nicely for $(A_{\overline{h}} \gamma_n)(\zeta)$. Recall that $A_{\overline{h}} \gamma_n$ is a rational function whose poles are off of $\D^{-}$. Is $A_{\overline{h}} \vp_n$ such a nice function so we can compute $(A_{\overline{h}} \vp_n)(\zeta)$ without any difficulty?
The second problem, assuming we can overcome the first, is to show that perhaps the natural choice of kernel function 
$$k:= \sum_{n \geq 1}  \overline{(A_{\overline{h}}\vp_n)(\zeta)} \vp_n$$
satisfies the interpolation condition in Corollary \ref{C-int-prob}.

One could also ask whether or not one could extend our results to determine, as in Ahern-Clark, when the \emph{derivatives} (of certain orders)  of functions in $\Rng A_{\overline{h}}$, have non-tangential limits at $\zeta \in \T$. 

\bibliography{range}

\end{document}